\pdfoutput=1
\documentclass[letterpaper]{amsart}

\usepackage[utf8]{inputenc}
\usepackage{lmodern}
\usepackage[T1]{fontenc}
\usepackage{amssymb}
\usepackage{enumitem}
\usepackage{mathtools}

\newif\ifarxiv
\arxivtrue
\ifarxiv
\usepackage{tikz-cd}
\usepackage[pdfusetitle]{hyperref}
\else

\usepackage{tikz-cd}
\usepackage[dvipdfmx,pdfusetitle]{hyperref}
\fi
\usepackage{cleveref}

\tikzcdset{arrow style=Latin Modern}

\allowdisplaybreaks[4]

\newlist{enumarabic}{enumerate}{1}
\setlist[enumarabic]{font=\normalfont,label=(\arabic*),leftmargin=0.3in}
\newlist{enumroman}{enumerate}{1}
\setlist[enumroman]{font=\normalfont,label=(\roman*),leftmargin=0.3in}

\theoremstyle{plain}
\newtheorem{theorem}{Theorem}[section]
\newtheorem{proposition}[theorem]{Proposition}
\newtheorem{lemma}[theorem]{Lemma}
\newtheorem{corollary}[theorem]{Corollary}

\theoremstyle{definition}

\newtheorem{question}[theorem]{Question}

\theoremstyle{remark}
\newtheorem*{acknowledgements}{Acknowledgements}

\numberwithin{equation}{section}


\DeclareMathAlphabet\mathbfit{OML}{cmm}{b}{it}

\let\newterm\emph

\def\cf{\emph{cf.}}

\let\epsilon\varepsilon
\let\phi\varphi
\let\emptyset\varnothing

\def\N{\mathbb{N}}
\def\Z{\mathbb{Z}}
\def\kk{\Bbbk}

\def\deg#1{|#1|}

\let\shuffle\nabla
\def\AW{AW}
\def\h{H}

\def\tsz{t_{\mathrm{sz}}}
\def\GG{\Omega}
\def\hh{\tilde\h}

\def\tX{\tilde{X}}
\def\tY{\tilde{Y}}
\def\tZ{\tilde{Z}}
\def\tpartial{\tilde{\partial}}
\def\ts{\tilde{s}}
\def\td{\tilde{d}}
\def\tiota{\tilde{T}}
\def\ttau{\tilde{\tau}}
\def\tT{\tilde{T}}
\def\talpha{\tilde{\alpha}}
\def\tbeta{\tilde{\beta}}

\def\id{\operatorname{id}}

\def\otimesL#1{\mathbin{{}_{#1}\otimes}}
\def\ii{\mathbfit{i}}


\begin{document}

\title[Szczarba's twisting cochain]{Szczarba's twisting cochain\\and the Eilenberg--Zilber maps}
\hypersetup{pdftitle={Szczarba's twisting cochain and the Eilenberg-Zilber maps}}
\author{Matthias Franz}
\thanks{The author was supported by an NSERC Discovery Grant.}
\address{Department of Mathematics, University of Western Ontario,
  London, Ont.\ N6A\;5B7, Canada}
\email{mfranz@uwo.ca}

\subjclass[2010]{Primary 55U10; secondary 55R20, 55U25}

\begin{abstract}
  We show that Szczarba's twisting cochain for a twisted Cartesian product
  is essentially the same as the one constructed by Shih.
  More precisely, Szczarba's twisting cochain can be obtained via the basic perturbation lemma
  if one uses a `reversed' version of the classical Ei\-len\-berg--Mac\,Lane
  homotopy for the Ei\-len\-berg--Zilber contraction.
  Along the way we prove several new identities involving these homotopies.
\end{abstract}

\maketitle

\section{Introduction}

Let \(F\hookrightarrow E\to B\) be a fibre bundle.
In 1959, E.~H.~Brown~\cite[Sec.~4]{Brown:1959} showed that for path-connected base~\(B\) the homology of the total space~\(E\)
is isomorphic to that of the twisted tensor product
\begin{equation}
  C(F)\otimes_{t} C(B).
\end{equation}
Here ``twisted'' means that the the usual tensor product differential is modified to a twisted differential~\(d_{t}\).
Brown used acyclic models to construct the twisting cochain~\(t\colon C(B)\to C(\Omega B)\) that determines this deviation.

Shortly afterwards, such twisting cochains were constructed by less opaque means
in the simplicial setting, that is, for twisted Cartesian products~\(F\times_{\tau}B\),
where \(F\) and~\(B\) are simplicial sets and \(\tau\colon B_{>0}\to G\) is a twisting function with values in the structure group~\(G\).
Szczarba~\cite{Szczarba:1961} gave an explicit formula for~\(t\) in terms of~\(\tau\)
while Shih~\cite{Shih:1962} described an algorithm for computing \(t\)
that starts with the Eilenberg--Zilber maps
(Alexander--Whitney map~\(\AW\), shuffle map~\(\shuffle\), Ei\-len\-berg--Mac\,Lane's homotopy~\(\h\))
and applies a perturbation determined by~\(\tau\).
This latter approach was the birth of what is nowadays called homological perturbation theory.

Later Rubio~\cite[p.~53]{Rubio:1991} observed experimentally that Shih's and Szczarba's twisting cochains agree
if one applies Shih's algorithm not to the usual Ei\-len\-berg--Zil\-ber contraction~\((\AW,\shuffle,\h)\), but to the ``opposite''
triple~\((\AW,\shuffle,\hh)\) where \(\hh\) is obtained from~\(\h\) by reversing simplices and transposing the factors
(see \Cref{sec:ez-reversed} for details). In \Cref{thm:szarba-shih} we prove this assertion.

\begin{theorem}
  \label{thm:main}
  Szczarba's twisting cochain is equal to Shih's twisting cochain
  based on the opposite Ei\-len\-berg--Zilber contraction~\((\AW,\shuffle,\hh)\).
\end{theorem}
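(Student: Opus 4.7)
The plan is to write out both twisting cochains as explicit sums, then match them term by term using new identities for~$\hh$.

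First I would unwind Shih's construction fed with the opposite contraction~$(\AW,\shuffle,\hh)$. The basic perturbation lemma, applied to the perturbation of the tensor product differential induced by~$\tau$, gives a formal series
\[
  t_{\mathrm{Shih}} \;=\; \sum_{n\ge 0}\,(\text{sign})\;p\circ\AW\circ(\delta_{\tau}\hh)^{n}\circ\shuffle,
\]
where $\delta_{\tau}$ is the perturbation operator and $p$ is the projection onto~$C(G)$. Expanding $(\delta_{\tau}\hh)^{n}\circ\shuffle$ applied to a simplex~$\sigma\in B_{n}$ produces an indexed sum whose summands are products of $\tau$-values on specific face-degeneracy combinations of~$\sigma$. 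In parallel, I would recall Szczarba's closed-form expression, which is itself a signed sum indexed by tuples $(q_{1},\dots,q_{n})$ of integers with $0\le q_{i}\le i-1$ (the indices that define the Szczarba operators), whose summands are also products of $\tau$-values on nested face-degeneracy expressions.

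The core of the argument is then a bijection between the BPL indexing and Szczarba's indexing, together with a summand-by-summand verification. The reason one expects such a bijection to exist in the first place is that $\hh$, being obtained from~$\h$ by reversing simplices and swapping tensor factors, produces at each iteration the ``outermost'' Szczarba face (rather than the ``innermost'', which is what~$\h$ would produce). Making this precise requires establishing the identities for~$\hh$ advertised in the abstract: compatibility relations between $\hh$ and $\AW$, $\shuffle$, and the face/degeneracy operators that are robust enough to survive $n$-fold iteration and that precisely reproduce the recursive structure of Szczarba's formula. I would prove these identities first, in increasing generality, starting from the definition of~$\hh$ via Eilenberg--Mac\,Lane's shuffle-type sum and working up to an explicit description of $\AW\circ\hh^{k}$ on a decomposable tensor.

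Once those identities are in hand, the matching reduces to an induction on~$n$: the $n$-th term of the BPL series for~$\sigma$ factors as ``the $(n{-}1)$-st term applied to the appropriate face of~$\sigma$, times~$\tau$ of the appropriate Szczarba operator'', and the same recursion is satisfied by Szczarba's formula by construction. The main obstacle will be the combinatorial and sign bookkeeping in these identities: getting the degeneracies inserted by~$\hh$ to line up with the degeneracies appearing in Szczarba's operators, and checking that the signs coming from the alternating sum in~$\hh$ combine with the BPL signs to give Szczarba's signs. Conceptually everything is driven by the reversal built into~$\hh$; technically, formulating the identities in the right form — strong enough to iterate but concrete enough to match Szczarba — is where the real work lies.
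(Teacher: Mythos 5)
Your plan — expand both twisting cochains into explicit indexed sums and match term by term — is a genuinely different route from the one the paper takes, and it is the route one might expect a priori, but it is not the one that makes the proof tractable. The paper avoids any direct combinatorial comparison: instead it invokes the Morace--Prouté characterization of Szczarba's cochain as the \emph{unique} natural twisting cochain $t\colon C(B)\to C(\Omega B)$ satisfying a normalization condition on $1$-simplices together with $\bar h\, t(\bar e_n)=0$ for $n\ge 2$, where the latter holds whenever every simplex appearing in $t(\bar e_n)$ is right-justified (has last vertex $n$). After reducing to the case of a reduced base $B$ and structure group $G=\Omega B$ by naturality, the paper only needs to observe that $\delta$ and $\hh$ never invoke the last face operator, so every simplex Shih's algorithm produces is automatically right-justified. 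This collapses what you are proposing — a full induction with bijection of index sets and sign bookkeeping — to a one-line inspection of the formula for $\hh$. What your approach would buy, if carried out, is an explicit closed form for $\AW\,(\delta\hh)^n\,\delta\,\shuffle$ and a concrete bijection with Szczarba's index set $S_n$, which could be of independent combinatorial interest; but you have deferred exactly the step where the content lies (``the combinatorial and sign bookkeeping''), and that step is genuinely heavy — it is essentially why Rubio's observation stayed a conjecture for decades. You would also need the reduction to reduced $B$ (your outline does not address that Shih's and Szczarba's setups require it, or how naturality transports the identity across the quotient $B\to\tilde B$), and you should double-check your index constraint: Szczarba's tuples satisfy $0\le i_s\le n-s$, not $0\le q_i\le i-1$. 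In short, the approach is plausible but unexecuted and much harder than the paper's; the key idea you are missing is the Morace--Prouté uniqueness criterion, which replaces the entire combinatorial matching with a qualitative structural property of~$\hh$.
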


The proof uses a characterization of Szczarba's twisting cochain obtained by Morace--Prouté~\cite{MoraceProute:1994}.

In order to apply Shih's theory to the opposite Eilenberg--Zilber contraction, we investigate
how both~\(\h\) and~\(\hh\) interact with the shuffle map~\(\shuffle\) and the Alexan\-der--Whitney map~\(\AW\).
This in fact occupies the largest part of this paper.
Such relations are all the more interesting as the homotopies~\(\h\) and~\(\hh\)
gives rise to important structures on cochain algebras: they
can be used to define Steenrod's \(\cup_{i}\)-products \cite[Thm.~3.2]{GonzalezDiazReal:1999}
as well as the homotopy Gerstenhaber structure \cite[Sec.~5]{HessEtAl:2006} 
that induces a product on the bar construction.

Besides offering rigorous proofs for certain commutative diagrams given by Shih as well as their ``mirror images''
(Propositions~\ref{thm:aw-h} and~\ref{thm:shuffle-h}),
we obtain several new identities. For example, the homotopies~\(\h_{X\times Y}\) and~\(\h_{X,Y\times Z}\)
that arise from the two ways of splitting up a triple Cartesian product~\(X\times Y\times Z\)
commute in the graded sense,
\begin{equation}
  \h_{X\times Y,Z}\,\h_{X,Y\times Z} = -\h_{X,Y\times Z}\,\h_{X\times Y,Z},
\end{equation}
see \Cref{thm:h-h-xyz}. As shown in \Cref{sec:ez-reversed},
all identities carry over to~\(\hh\).

We fix our notation in \Cref{sec:notation}.
The classical Eilenberg--Zilber contraction is studied in \Cref{sec:ez} and the opposite one in \Cref{sec:ez-reversed}.
In \Cref{sec:relations} we summarize all known relations among the Eilenberg--Zilber maps and ask if there are any further.
Szczarba's and Shih's twisting cochains are compared in \Cref{sec:tw-compare} and their twisted shuffle maps in \Cref{sec:twisted-shuffle}.

\begin{acknowledgements}
  I thank Francis Sergeraert for sending me Rubio's thesis~\cite{Rubio:1991}.
\end{acknowledgements}

\section{Preliminaries}
\label{sec:notation}

Let \(\kk\) be a commutative ring with unit. All complexes and tensor products we consider are over~\(\kk\).
The identity map of a complex is written as~\(1\).
Given two complexes~\(A\) and~\(B\), the transposition of factors is defined to be the chain map
\begin{equation}
  \label{eq:def-T}
  T_{A,B}\colon A\otimes B\to B\otimes A,
  \quad
  a\otimes b\mapsto (-1)^{\deg{a}\deg{b}}\,b\otimes a.
\end{equation}

Throughout this note, the letters~\(X\),~\(Y\),~\(Z\) and~\(W\) denote simplicial sets.
Let \(x\in X_{n}\) be a simplex. For~\(0\le p\le q\le n\) we write
\begin{equation}
  \partial_{p}^{q}\,x = \partial_{p}\cdots\partial_{q}\,x,
  \qquad
  \partial_{p}^{p-1}\,x = x
\end{equation}
for the iteration of face maps.
Given a set~\(\{\,\alpha_{1}<\dots<\alpha_{p}\,\}\) of non-negative integers with~\(\alpha_{p}\ge n+p-1\) if~\(p>0\), we also write
\begin{equation}
  s_{\alpha}\,x = s_{\alpha_{p}}\cdots s_{\alpha_{1}}\,x,
  \qquad
  s_{\emptyset}\,x = x
\end{equation}
for the iteration of degeneracy maps.
We denote the normalized chain complex of~\(X\) with coefficients in~\(\kk\) by~\(C(X)\).

Let \(p\),~\(q\ge0\).
A \((p,q)\)-shuffle is a partition of the set~\(\{\,0,\dots,p+q-1\,\}\) into subsets~\(\alpha\) and~\(\beta\)
of sizes~\(\deg{\alpha}=p\) and~\(\deg{\beta}=q\); it is denoted by~\((\alpha,\beta)\vdash(p,q)\).
We write \((-1)^{(\alpha,\beta)}\) for its signature, that is,
the sign of the permutation~\((\alpha_{1}<\dots<\alpha_{p},\allowbreak\beta_{1}<\dots<\beta_{q})\).

By a \newterm{simplicial operator} we mean what is called an ``FD-operator'' in~\cite[Sec.~3]{EilenbergMacLane:1953}.
The definition of the derived operator~\(f'\) of a simplicial operator~\(f\) appears in~\cite[p.~59]{EilenbergMacLane:1953},
for tensor products in~\cite[p.~53]{EilenbergMacLane:1954} and for the shuffle map in~\cite[eq.~(5.3\('\))]{EilenbergMacLane:1953}.
\newterm{Frontal} simplicial operators are defined in~\cite[p.~60]{EilenbergMacLane:1953};
by~\cite[Lemma~3.3]{EilenbergMacLane:1953} they satisfy
\begin{equation}
  \label{eq:frontal}
   s_{0}\,f = f'\,s_{0}.
\end{equation}

Note that a priori a simplicial operator~\(f\) is defined on non-normalized chain complexes only. Even if \(f\) descends
to normalized chains, this may fail for~\(f'\). (An example is the differential~\(d\).) Moreover, the second part
of the following observation shows that Ei\-len\-berg--Mac\,Lane's definition of derived operators does not behave well
with respect to the Koszul sign rule. Nevertheless, for ease of referencing we do not modify the definition.

\begin{lemma}
  \label{thm:derived}
  Let \(f\) and~\(g\) be simplicial operators.
  \begin{enumroman}
  \item \label{thm:derived-degenerate}
    If \(f\) and \(g\) agree modulo degenerate chains, then so do \(f'\) and~\(g'\).
  \item \label{thm:derived-tensor}
    One has \((f\otimes g)' = (-1)^{\deg{g}}\,f'\otimes g'\).
  \end{enumroman}
\end{lemma}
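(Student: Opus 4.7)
The plan rests on Eilenberg--MacLane's explicit structure theorem for FD-operators: every simplicial operator~$f$ admits a unique normal form as a $\kk$-linear combination of basic words~$s_{\beta}\partial_{\alpha}$, and the derived construction acts by shifting every face and degeneracy index up by one, $(s_{\beta}\partial_{\alpha})'=s_{\beta+1}\partial_{\alpha+1}$, extended $\kk$-linearly. In particular $f\mapsto f'$ is linear and preserves the normal-form decomposition term by term.

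For part~\ref{thm:derived-degenerate}, I would observe that a basic word~$s_{\beta}\partial_{\alpha}$ factors through the degenerate subcomplex precisely when $\beta$ is nonempty, \ie, when the word begins with at least one degeneracy. This property is visibly preserved by shifting all indices up by one, since $\beta+1$ is nonempty iff $\beta$ is. By hypothesis, the normal form of~$f-g$ contains only basic terms whose $\beta$-part is nonempty, and the same is therefore true for~$(f-g)' = f' - g'$. A cross-check using the frontal relation~\eqref{eq:frontal} on $s_0$-images would be available if one wished to avoid normal forms altogether.

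For part~\ref{thm:derived-tensor}, bilinearity of both sides in~$f$ and~$g$ reduces the statement to the case of two basic simplicial operators. One then computes~$(f\otimes g)'$ and~$f'\otimes g'$ directly from Eilenberg--MacLane's definition of the tensor product of FD-operators and its derived construction (\cite[p.~53]{EilenbergMacLane:1954} and~\cite[p.~59]{EilenbergMacLane:1953}). The sign~$(-1)^{\deg{g}}$ should emerge as the cost of threading the ``extra slot'' introduced by the derived operation past~$g$ under EM's non-Koszul conventions for the tensor product of simplicial operators; this is precisely the awkwardness highlighted in the paragraph preceding the lemma.

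The main obstacle is the sign in~\ref{thm:derived-tensor}: part~\ref{thm:derived-degenerate} is essentially structural and falls out as soon as the normal-form picture is in place, but~\ref{thm:derived-tensor} requires careful sign bookkeeping on a general basic bi-word $s_{\beta}\partial_{\alpha}\otimes s_{\delta}\partial_{\gamma}$. I would treat this by induction on the length of the right-hand factor, using the known interaction of the derived operation with the face and degeneracy generators separately, each of which contributes exactly one factor of~$-1$ to~$(-1)^{\deg{g}}$ (face maps having degree~$-1$ and degeneracies degree~$+1$).
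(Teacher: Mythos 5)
Your proposal is correct and takes essentially the same route as the paper: part (i) is exactly the paper's canonical-form argument (every term of \(f-g\) has a leading degeneracy, and this survives raising all indices by one), and part (ii) is the direct verification from Ei\-len\-berg--Mac\,Lane's definition that the paper leaves implicit, with the sign \((-1)^{\deg{g}}\) arising, as you indicate, from reconciling their unsigned convention for tensor products of operators with the Koszul convention once the derived operation shifts the dimension of the first factor up by one. I see no gap, provided you record along the way that the derived operation is multiplicative on words (it is just the uniform index shift), which your induction on the length of \(g\) tacitly uses.
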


\begin{proof}
  The assumption in the first claim means that every monotonic operator appearing in~\(g-f\)
  has a leading degeneracy operator in its canonical form, compare~\cite[p.~59]{EilenbergMacLane:1953}.
  Hence the same holds for~\(g'-f'=(g-f)'\).
  The second claim follows directly from the definition of derived operators.
\end{proof}

\section{The Eilenberg--Zilber maps}
\label{sec:ez}

We start by reviewing the Ei\-len\-berg--Zilber maps\footnote{%
  This is a misnomer. All three maps were introduced by Ei\-len\-berg--Mac\,Lane
  \cite[eq.~(5.3)]{EilenbergMacLane:1953}, \cite[eqs.~(2.8),~(2.13)]{EilenbergMacLane:1954},
  with the obvious inspiration for the Alexander--Whitney map.}.
The Alexander--Whitney map is given by
\begin{align}
  \AW=\AW_{X,Y}\colon C(X\times Y) &\to C(X)\otimes C(Y), \\
  \notag
  (x,y) &\mapsto \sum_{k=0}^{n} \partial_{k+1}^{n}\,x \otimes \partial_{0}^{k-1}\,y \\
\shortintertext{for~\((x,y)\in X_{n}\times Y_{n}\). The shuffle map is defined as}
  \shuffle=\shuffle_{X,Y}\colon C(X)\otimes C(Y) &\to C(X\times Y), \\*
  \notag
  x \otimes y &\mapsto \sum_{(\alpha,\beta)\vdash(p,q)}
    (-1)^{(\alpha,\beta)}\,(s_{\beta}\,x,s_{\alpha}\,y)
  \end{align}
where \(p=\deg{x}\) and~\(q=\deg{y}\).
The homotopy
\begin{equation}
  \label{eq:def-h}
  \h=\h_{X,Y}\colon C(X\times Y) \to C(X\times Y),
\end{equation}
\begin{multline*}
  \qquad (x,y) \mapsto \sum_{0\le p+q < n}\:\sum_{(\alpha,\beta)\vdash(p+1,q)} (-1)^{m+1+(\alpha,\beta)} \\
  {} \cdot \bigl(s_{\beta+m}\,s_{m-1}\,\partial_{n-q+1}^{n}\,x,
  s_{\alpha+m}\,\partial_{m}^{n-q-1}\,y\bigr) \qquad
\end{multline*}
for~\((x,y)\in X_{n}\times Y_{n}\) with~\(m=n-p-q\) has been recursively defined by  Ei\-len\-berg--Mac\,Lane~\cite[eq.~(2.13)]{EilenbergMacLane:1954}
via
\begin{equation}
  \label{eq:def-h-rec}
  H_{0} = 0,
  \qquad
  H_{n} = -H_{n-1}' + F_{n-1}'\,s_{0}
\end{equation}
for~\(n>0\), where
\begin{equation}
  \label{eq:def-F}
  F 
  =\shuffle\,AW\colon C(X\times Y)\to C(X\times Y).
\end{equation}
The explicit formula given above is due to Rubio and Morace, see~\cite[Sec.~3.1]{Rubio:1991},~\cite[Sec.~6]{Real:2000}.\footnote{%
  Contrary to the claim made in~\cite[Thm.~6.2]{Real:2000},
  Ei\-len\-berg--Mac\,Lane's homotopy~\eqref{eq:def-h-rec} does \emph{not} satisfy \(d(\h) = 1-\shuffle\AW\).
  The sign exponent~\(n-p-q=1\) is erroneously not taken into account when the equations (65) and~(70) are compared in~\cite[p.~85]{Real:2000}.}
We will also use that \(\h\) is frontal \cite[p.~53]{EilenbergMacLane:1954}.
  
The Eilenberg--Zilber maps~\(\AW\),~\(\shuffle\) and~\(\h\) satisfy the properties
stated in~\cite[Thm.~2.1a]{EilenbergMacLane:1954} and~\cite[\S II.1]{Shih:1962},\footnote{%
  The proof of the identity~\(\h\h=0\) in~\cite[p.~26]{Shih:1962}
  implicitly uses an argument like \Cref{thm:derived}\,\ref{thm:derived-degenerate}.}
\begin{equation}
  \label{eq:ez-contraction}
  \AW\,\shuffle = 1,
  \quad
  \shuffle\AW = 1 + d(\h),
  \quad
  \h\,\shuffle = 0,
  \quad
  \AW\,\h = 0,
  \quad
  \h\,\h = 0.
\end{equation}
This means that the triple~%
\( 
  (\AW,\shuffle,\h)
\) 
forms a \newterm{contraction} in the sense of homological perturbation theory,
compare~
\cite{Brown:1965},~\cite[Sec.~3]{Gugenheim:1972}, \cite[Sec.~3]{Real:2000}.

\begin{proposition}
  \label{thm:aw-h}
  \tikzcdset{diagrams={ampersand replacement=\&,column sep=huge}}
  The following diagrams commute.
  \begin{gather}
    \tag{\(\mathrm{A}_{1}\)}
    \label{eq:aw-h}
    \begin{tikzcd}
      C(X\times Y\times Z) \arrow{d}{\h_{X\times Y,Z}} \arrow{r}{\AW_{X,Y\times Z}} \& C(X)\otimes C(Y\times Z)\arrow{d}{1\otimes \h_{Y,Z}} \\
      C(X\times Y\times Z) \arrow{r}{\AW_{X,Y\times Z}} \& C(X)\otimes C(Y\times Z)
    \end{tikzcd}
    \\
    \tag{\(\mathrm{A}_{2}\)}
    \label{eq:h-aw}
    \begin{tikzcd}
      C(X\times Y\times Z) \arrow{d}{\h_{X,Y\times Z}} \arrow{r}{\AW_{X\times Y,Z}} \& C(X\times Y)\otimes C(Z) \arrow{d}{\h_{X,Y}\otimes 1} \\
      C(X\times Y\times Z) \arrow{r}{\AW_{X\times Y,Z}} \& C(X\times Y)\otimes C(Z)
    \end{tikzcd}
  \end{gather}    
\end{proposition}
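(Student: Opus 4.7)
I would prove both diagrams by induction on the simplicial dimension $n$ of the input $(x,y,z) \in (X\times Y\times Z)_n$, leveraging the recursive definition~\eqref{eq:def-h-rec} of $\h$. The base case $n=0$ is immediate since $H_{0}=0$ makes both composites vanish.

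For the inductive step of~\eqref{eq:aw-h}, I substitute the recursion $\h = -\h' + F'\,s_{0}$ (with $F = \shuffle\,\AW$) into $\h_{X\times Y, Z}$ on the right and into $\h_{Y, Z}$ on the left, and verify the two resulting pieces separately. The \emph{$F'\,s_{0}$ piece} reduces, via the frontality of $\AW$ and $F$ relative to~\eqref{eq:frontal}, to an untwisted compatibility between $\AW$ and $F$, which I would derive from $F = \shuffle\,\AW$, the relation $\AW\,\shuffle = 1$ in~\eqref{eq:ez-contraction}, and the standard coassociativity $(1 \otimes \AW_{Y, Z})\,\AW_{X, Y\times Z} = (\AW_{X, Y} \otimes 1)\,\AW_{X\times Y, Z}$ of the iterated Alexander--Whitney map. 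The \emph{derived piece} is obtained by applying derivation to the induction hypothesis in dimension $n-1$ and invoking \Cref{thm:derived}\,\ref{thm:derived-tensor}, which yields $(1 \otimes \h_{Y,Z})' = -\,1 \otimes \h'_{Y,Z}$; the sign combines consistently with the $-\h'$ summand of the recursion. Diagram~\eqref{eq:h-aw} follows by the symmetric argument, now using the sign-free formula $(\h_{X,Y} \otimes 1)' = \h'_{X,Y} \otimes 1$.

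\textbf{Main obstacle.} The delicate point is justifying that derivation interacts correctly with $\AW$ on composite operators. A priori, Eilenberg--Mac\,Lane's derivation is defined within a fixed simplicial set, whereas the composites $\AW \circ \h$ and $(1 \otimes \h)\,\AW$ connect different iterated products. I would resolve this by viewing each composite as a simplicial operator on the triple product $X \times Y \times Z$ and exploiting that $\AW$, $\shuffle$, $\h$, and $F$ are all frontal, so that their compositions behave predictably under derivation via~\eqref{eq:frontal}. Tracking the signs contributed by \Cref{thm:derived}\,\ref{thm:derived-tensor} against the sign in the recursion~\eqref{eq:def-h-rec} will be the most delicate bookkeeping in the argument.
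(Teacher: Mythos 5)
Your approach is genuinely different from the paper's: you propose an induction using the recursive definition~\eqref{eq:def-h-rec} of~\(\h\), whereas the paper deliberately avoids this route for the diagrams~\eqref{eq:aw-h} and~\eqref{eq:h-aw}. It instead proves Proposition~\ref{thm:aw-hh} (the \(\hh\)-analogue) by direct index manipulation with the explicit non-recursive formula~\eqref{eq:def-hh}, and then transfers the result to~\(\h\) via the simplex-reversal and transposition machinery of \Cref{thm:shuffle-aw-h-hh} and the cube diagram~\eqref{eq:cube-hh-aw}. The paper explicitly says it is \emph{not} making Shih's recursive argument for~\eqref{eq:h-aw} rigorous, citing ``numerous inaccuracies'' in the original; your proposal essentially follows Shih's original strategy.

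There is a concrete gap in your plan. In the paper's recursive proof of the \emph{shuffle} diagrams (\Cref{thm:shuffle-h}), both pieces of the recursion are handled using the Eilenberg--Mac\,Lane identity~\eqref{eq:shuffle-shuffle-deriv}, which expresses~\(\shuffle\) on an \(n\)-dimensional argument in terms of~\(\shuffle'\) applied to \(s_{0}\)-lifted lower-dimensional inputs. This is what turns the inductive hypothesis (stated in terms of~\(\shuffle\)) into a statement about~\(\shuffle'\), and what lets the \(F'\,s_{0}\) term factor through \(s_{0}\,\shuffle = \shuffle'\,s_{0}\). There is no analogous identity relating~\(\AW\) to its derived operator~\(\AW'\): frontality gives \((s_{0}\otimes s_{0})\,\AW = \AW'\,s_{0}\), which lets you move~\(s_{0}\) from right to left across~\(\AW\), but your argument needs to move it from left to right, i.e., to control \(\AW\,s_{0}\) (arising from \(\AW\,F'\,s_{0} = \AW\,s_{0}\,F\)), and frontality does not help there. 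Your claim that the \(F'\,s_{0}\) piece ``reduces to an untwisted compatibility between~\(\AW\) and~\(F\)'' via coassociativity and \(\AW\,\shuffle = 1\) therefore skips the real obstruction. Likewise, for the \(-\h'\) pieces, applying the derived operator to the induction hypothesis produces a relation involving~\(\AW'\) rather than~\(\AW\), and you have not supplied the bridge back. Without an explicit \(\AW\)-analogue of~\eqref{eq:shuffle-shuffle-deriv} -- or some substitute mechanism -- the induction does not close. The paper avoids exactly this difficulty by working with the closed formula for~\(\hh\) instead.
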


\begin{proposition}
  \label{thm:shuffle-h}
  \tikzcdset{diagrams={ampersand replacement=\&,column sep=huge}}
  The following diagrams commute.
  \begin{gather}
    \tag{\(\mathrm{B}_{1}\)}
    \label{eq:shuffle-h}
    \begin{tikzcd}
      C(X)\otimes C(Y\times Z) \arrow{d}{1\otimes \h_{Y\times Z}} \arrow{r}{\shuffle_{X,Y\times Z}} \& C(X\times Y\times Z) \arrow{d}{\h_{X\times Y,Z}} \\
      C(X)\otimes C(Y\times Z) \arrow{r}{\shuffle_{X,Y\times Z}} \& C(X\times Y\times Z)
    \end{tikzcd}
    \\
    \tag{\(\mathrm{B}_{2}\)}
    \label{eq:h-shuffle}
    \begin{tikzcd}
      C(X\times Y)\otimes C(Z) \arrow{d}{\h_{X\times Y}\otimes 1} \arrow{r}{\shuffle_{X\times Y,Z}} \& C(X\times Y\times Z) \arrow{d}{\h_{X,Y\times Z}} \\
      C(X\times Y)\otimes C(Z) \arrow{r}{\shuffle_{X\times Y,Z}} \& C(X\times Y\times Z)
    \end{tikzcd}
  \end{gather}
\end{proposition}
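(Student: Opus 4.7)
My plan is to prove both diagrams of \Cref{thm:shuffle-h} by induction on degree, using Ei\-len\-berg--Mac\,Lane's recursive definition~\eqref{eq:def-h-rec} of~$\h$. Because $\h$ is not symmetric in its two arguments, the diagrams \eqref{eq:shuffle-h} and~\eqref{eq:h-shuffle} are not interchangeable by a formal transposition and require separate (though structurally parallel) arguments; I would present~\eqref{eq:shuffle-h} in detail and then indicate the modifications needed for~\eqref{eq:h-shuffle}.

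For the base case $n=0$ both composites in~\eqref{eq:shuffle-h} vanish since $\h_{0}=0$. For the inductive step in degree $n>0$, I would apply $\h_{n}=-\h_{n-1}'+F_{n-1}'\,s_{0}$ (with $F=\shuffle\,\AW$) to the outer~$\h$ on each side of~\eqref{eq:shuffle-h} and match the resulting expressions term by term. The two ``$\h'$--terms'' should agree by the inductive hypothesis combined with \Cref{thm:derived}\,\ref{thm:derived-tensor}, which converts $(1\otimes\h_{Y,Z,n-1})'$ into $\pm 1\otimes\h_{Y,Z,n-1}'$, with the Koszul sign of $-1$ absorbed into the leading sign of the recursion. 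The two ``$F'\,s_{0}$--terms'' should agree by decomposing $F=\shuffle\,\AW$ and invoking \Cref{thm:aw-h} to move the~$\AW$ factor across the two groupings of $X\times Y\times Z$, together with known compatibilities of~$\shuffle$ with~$s_{0}$ and with derivation (the shuffle analog of the frontality of~$\h$ noted just before~\eqref{eq:ez-contraction}).

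The main obstacle I anticipate is the bookkeeping of signs and of degenerate--chain congruences. By \Cref{thm:derived}\,\ref{thm:derived-tensor} derivation does not commute with the Koszul sign rule, and by \Cref{thm:derived}\,\ref{thm:derived-degenerate} it preserves only those identities one has already promoted past a ``modulo degeneracies'' ambiguity; so each reduction has to be checked either on genuine operators or after an explicit appeal to \Cref{thm:derived}\,\ref{thm:derived-degenerate}, and the tensor derivation signs must be verified to cancel against the leading~$-1$ in~\eqref{eq:def-h-rec}. Diagram~\eqref{eq:h-shuffle} is then dispatched by the symmetric version of the same induction, applying the recursion to~$\h_{X,Y\times Z}$ and to~$\h_{X,Y}\otimes 1$ respectively and appealing to the second form of \Cref{thm:aw-h} in place of the first.
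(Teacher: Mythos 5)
Your overall strategy — induction on degree via the recursion~\eqref{eq:def-h-rec}, base case handled by the vanishing of~$\h_{0}$ — is indeed the same as the paper's, and your reading of \Cref{thm:derived}\,\ref{thm:derived-tensor} for the Koszul sign is right. There are, however, two gaps in the inductive step.

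The more serious one is the citation of \Cref{thm:aw-h} for the $F'\,s_{0}$--terms. That proposition relates $\AW$ to~$\h$, but $F=\shuffle\,\AW$ involves no~$\h$ at all, so it cannot serve to move $\AW$ across the two groupings. The tool actually needed is the compatibility of $\AW$ with~$\shuffle$ given by Shih's Lemme~II.4.2 — diagram~\eqref{eq:shuffle-aw-1} in the paper,
\begin{equation*}
  \AW_{X\times Y,Z}\,\shuffle_{X,Y\times Z} \equiv (\shuffle_{X,Y}\otimes 1)(1\otimes\AW_{Y,Z})
\end{equation*}
modulo degeneracies — combined with associativity of $\shuffle$ and the frontality identity~\eqref{eq:frontal} applied to~$\shuffle$. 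For~\eqref{eq:h-shuffle} the mirror image~\eqref{eq:shuffle-aw-2} plays the analogous role. (As an aside, \Cref{thm:aw-h} is proved later in the paper by a quite different, non-recursive argument, so even had it applied you would be creating a forward reference where the paper deliberately avoids one.)

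The second gap is that the term-by-term matching you envisage — the $\h'$--term of the left side against the $\h'$--term of the right, likewise for the $F'\,s_{0}$--terms — does not hold modulo degeneracies; only the sum agrees. Writing $A=-\h'_{X\times Y,Z}\,\shuffle(a)$ and $B=F'_{X\times Y,Z}\,s_{0}\,\shuffle(a)$, one must first split $\shuffle(a)$ by the Eilenberg--Mac\,Lane identity~\eqref{eq:shuffle-shuffle-deriv} before applying the inductive hypothesis to~$A$. The piece of~$A$ supported on~$s_{0}x$ then still carries a factor~$\h'_{Y,Z}$, and only after adding~$B$ does the pair $(-\h'_{Y,Z},F'_{Y,Z}s_{0})$ reassemble into~$\h_{Y,Z}$ via the recursion applied to the \emph{inner} homotopy. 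So $A$ and $B$ cross-link, and a naive pairing of like terms would leave an uncancelled discrepancy $(-1)^{p}\shuffle'(x\otimes s_{0}F'_{Y,Z}s_{0}(y,z))$ on each side. Your outline does not anticipate either the appeal to~\eqref{eq:shuffle-shuffle-deriv} or this mixing of terms.
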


The diagrams~\eqref{eq:h-aw} and~\eqref{eq:shuffle-h} appear in Shih's work~\cite[\S II.4, Lemmes 3~\&~3\,bis]{Shih:1962}.
Because the arguments given there contain numerous inaccuracies, we prove them from scratch.
The diagrams~\eqref{eq:aw-h} and~\eqref{eq:h-shuffle} are contained in Gugenheim's paper~\cite[Lemma~4.0]{Gugenheim:1972}.
However, the homotopy~\(\h\) is not specified there, and instead of a proof the reader is referred to~\cite{Shih:1962}, where these diagrams cannot be found.\footnote{%
  Given that Gugenheim swapped the factors of the twisted tensor product compared to Shih,
  he might have been thinking of the homotopy~\(\hh\) defined in~\eqref{eq:def-hh} below, compare Propositions~\ref{thm:aw-hh} and~\ref{thm:shuffle-hh}.}

Instead of making Shih's arguments for the diagram~\eqref{eq:h-aw} rigorous,
we will present a different proof of \Cref{thm:aw-h}
which is based on the non-recursive definition~\eqref{eq:def-h} of~\(H\)
and can easily be adapted to~\eqref{eq:aw-h}.
We postpone that part to \Cref{sec:ez-reversed} because
the notation will be more transparent for the homotopy~\(\hh\) to be introduced there.
In order to prove \Cref{thm:shuffle-h} we need some preparations.

\begin{proof}[Proof of \Cref{thm:shuffle-h}]
  Let us verify that before normalization the diagram~\eqref{eq:shuffle-h} commutes modulo degenerate chains.
  We write ``\(\equiv\)'' if two chains differ by a degenerate chain. We closely follow Shih's strategy.

  Let \(a=x\otimes(y,z)\) with~\(x\in X_{p}\), \(y\in Y_{q}\) and~\(z\in Z_{q}\).
  If \(p=0\), then the shuffle map essentially reduces to the identity map, and
  for~\(q=0\) both homotopies vanish. We may therefore assume \(p>0\) and~\(q>0\). We proceed by induction on~\(p+q\).

  By the recursive definition~\eqref{eq:def-h-rec} of~\(\h\)
  we have \(\h_{X\times Y,Z}\shuffle_{X,Y\times Z}(a)=A+B\) with
  \begin{equation}
    A = -\h_{X\times Y,Z}'\,\shuffle_{X,Y\times Z}(a),
    \qquad
    B = F_{X\times Y,Z}'\,s_{0}\,\shuffle_{X,Y\times Z}(a).
  \end{equation}
  Using the identity~\cite[eq.~(5.7)]{EilenbergMacLane:1953}
  \begin{equation}
    \label{eq:shuffle-shuffle-deriv}
    \shuffle(x\otimes y) = \shuffle'(x\otimes s_{0}\,y) + (-1)^{p}\,\shuffle'(s_{0}\,x\otimes y)
  \end{equation}
  together with~\cite[Thm.~3.2]{EilenbergMacLane:1953}, we get
  \begin{align}
    A &= -\h'\,\shuffle'(x\otimes s_{0}(y,z)) - (-1)^{p}\,\h'\,\shuffle'(s_{0}\,x\otimes(y,z)) \\
    \notag &= -(\h\,\shuffle)'(x\otimes s_{0}(y,z)) - (-1)^{p}\,(\h\,\shuffle)'(s_{0}\,x\otimes(y,z)), \\
  \intertext{which by induction and \Cref{thm:derived}\,\ref{thm:derived-degenerate} is congruent to}
    \notag &\equiv -(\shuffle\,(1\otimes\h))'(x\otimes s_{0}(y,z)) - (-1)^{p}\,(\shuffle\,(1\otimes\h))'(s_{0}\,x\otimes(y,z)) \\
  \intertext{\Cref{thm:derived}\,\ref{thm:derived-tensor} and the identity~\eqref{eq:frontal} for the frontal operator~\(\h\) now imply}
    \notag &= \shuffle'\,(1\otimes\h)'(x\otimes s_{0}(y,z)) + (-1)^{p}\,\shuffle'\,(1\otimes\h)'(s_{0}\,x\otimes(y,z)) \\
    \notag &= (-1)^{p}\,\shuffle'(x\otimes \h'\,s_{0}(y,z)) - \shuffle'(s_{0}\,x\otimes\h'(y,z)) \\
    \notag &= (-1)^{p}\,\shuffle'(x\otimes s_{0}\,\h'(y,z)) - \shuffle'(s_{0}\,x\otimes\h'(y,z))
  \end{align}
  On the other hand, writing \(b=s_{0}\,x\otimes s_{0}(y,z)\) and
  taking the version
  \( 
    s_{0}\,\shuffle\,a = \shuffle'\,b
  \) 
  of the identity~\eqref{eq:frontal} for the frontal operator~\(\shuffle\)
  into account \cite[eq.~(5.5)]{EilenbergMacLane:1953}, we find
  \begin{align}
    B &= F_{X\times Y,Z}'\,\shuffle_{X,Y\times Z}'\,b = (\shuffle_{X\times Y,Z}\,\AW_{X\times Y,Z}\,\shuffle_{X,Y\times Z})'(b) \\
  \shortintertext{which by \cite[Lemme~II.4.2]{Shih:1962} (or diagram~\eqref{eq:shuffle-aw-1} below), \Cref{thm:derived}
    and the associativity of the shuffle map gives}
    \notag &\equiv \bigl(\shuffle_{X\times Y,Z}\,(\shuffle_{X,Y}\otimes 1)(1\otimes\AW_{Y,Z})\bigr)'(b) \\
    \notag &= \bigl(\shuffle_{X,Y\times Z}\,(1\otimes\shuffle_{Y,Z})(1\otimes\AW_{Y,Z})\bigr)'(b) \\
    \notag &= \shuffle_{X,Y\times Z}'\,(1\otimes F_{Y,Z})'(b) = \shuffle_{X,Y\times Z}'\,(1\otimes F_{Y,Z}')(b) \\
    \notag &= \shuffle_{X,Y\times Z}'\bigl(s_{0}\,x\otimes F_{Y,Z}'\,s_{0}(y,z)\bigr).
  \end{align}
  Combining \(A\) and~\(B\) and using again the formulas~\eqref{eq:def-h-rec} and~\eqref{eq:shuffle-shuffle-deriv}, we obtain
  \begin{align}
    \h\,\shuffle(a) &\equiv (-1)^{p}\,\shuffle'(x\otimes s_{0}\,\h'(y,z)) + \shuffle'(s_{0}\,x\otimes\h(y,z)) \\
    \notag &= (-1)^{p}\,\shuffle(x\otimes\h(y,z)) = \shuffle\,(1\otimes\h)(a),
  \end{align}
  as desired.
  
  The proof for the diagram~\eqref{eq:h-shuffle} is completely analogous.
  See the diagram~\eqref{eq:shuffle-aw-2} for the required mirror image of~\cite[Lemme~II.4.2]{Shih:1962}.
\end{proof}

\begin{corollary}
  \label{thm:f-h-xyz}
  The following identities hold between \(H\) and~\(F=\shuffle\AW\):
  \begin{align*}
    F_{X,Y\times Z}\,H_{X\times Y,Z} = H_{X\times Y,Z}\,F_{X,Y\times Z}, \\
    F_{X\times Y,Z}\,H_{X,Y\times Z} = H_{X,Y\times Z}\,F_{X\times Y,Z}.
  \end{align*}
\end{corollary}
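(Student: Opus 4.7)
The plan is to deduce both identities directly by combining the two commutative squares from each of Propositions \ref{thm:aw-h} and \ref{thm:shuffle-h}, since $F$ is by definition $\shuffle\,\AW$ and these propositions describe exactly how $H$ interacts with each of these two factors.

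For the first identity, I would start with $F_{X,Y\times Z}\,H_{X\times Y,Z} = \shuffle_{X,Y\times Z}\,\AW_{X,Y\times Z}\,H_{X\times Y,Z}$ and push $H_{X\times Y,Z}$ past $\AW_{X,Y\times Z}$ using diagram~\eqref{eq:aw-h}, which rewrites this as $\shuffle_{X,Y\times Z}\,(1\otimes H_{Y,Z})\,\AW_{X,Y\times Z}$. Then I would push $1\otimes H_{Y,Z}$ past $\shuffle_{X,Y\times Z}$ using diagram~\eqref{eq:shuffle-h}, obtaining $H_{X\times Y,Z}\,\shuffle_{X,Y\times Z}\,\AW_{X,Y\times Z} = H_{X\times Y,Z}\,F_{X,Y\times Z}$, as required. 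The second identity is proved symmetrically, starting from $F_{X\times Y,Z}\,H_{X,Y\times Z} = \shuffle_{X\times Y,Z}\,\AW_{X\times Y,Z}\,H_{X,Y\times Z}$, applying diagram~\eqref{eq:h-aw} to pass $H_{X,Y\times Z}$ through $\AW_{X\times Y,Z}$ as $H_{X,Y}\otimes 1$, and then applying diagram~\eqref{eq:h-shuffle} to absorb $H_{X,Y}\otimes 1$ into $H_{X,Y\times Z}$ on the other side of $\shuffle_{X\times Y,Z}$.

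There is essentially no obstacle here: the statement is an immediate formal consequence of the two propositions. The only thing to be careful about is pairing up the squares correctly, namely~\eqref{eq:aw-h} with~\eqref{eq:shuffle-h} for the first identity and~\eqref{eq:h-aw} with~\eqref{eq:h-shuffle} for the second, so that the intermediate tensor factor ($1\otimes H_{Y,Z}$ versus $H_{X,Y}\otimes 1$) matches on both sides.
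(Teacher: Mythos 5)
Your proof is correct and takes the same approach as the paper, which simply says to combine diagrams~\eqref{eq:aw-h}\,+\,\eqref{eq:shuffle-h} for the first identity and \eqref{eq:h-aw}\,+\,\eqref{eq:h-shuffle} for the second; you have merely spelled out the two-step substitution explicitly, with the correct pairings.
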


\begin{proof}
  Combine the diagrams~\eqref{eq:aw-h}\,+\,\eqref{eq:shuffle-h} and \eqref{eq:h-aw}\,+\,\eqref{eq:h-shuffle}, respectively.
\end{proof}

The next result also seems to be new.

\begin{proposition}
  \label{thm:h-h-xyz}
  The operators~\(\h_{X\times Y,Z}\) and~\(\h_{X,Y\times Z}\) commute in the graded sense,
  \begin{equation*}
    \h_{X\times Y,Z}\,\h_{X,Y\times Z} = -\h_{X,Y\times Z}\,\h_{X\times Y,Z}.
  \end{equation*}
\end{proposition}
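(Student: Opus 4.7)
The plan is to induct on simplicial degree, working modulo degenerate chains as in the proof of \Cref{thm:shuffle-h}, with the key idea that unfolding both the outer and inner homotopy in each product $\h_i\h_j$ via the recursion~\eqref{eq:def-h-rec} reveals that $\h_i\h_j$ coincides with its derived operator $(\h_i\h_j)'$ modulo degenerate chains. Write $\h_1 = \h_{X\times Y,Z}$, $\h_2 = \h_{X,Y\times Z}$, and $F_i = \shuffle_i\,\AW_i$, and fix $(i,j)\in\{(1,2),(2,1)\}$.

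The central claim to prove for each simplex $a$ is
\begin{equation*}
  \h_i\,\h_j(a) \equiv (\h_i\,\h_j)'(a) \pmod{\text{degenerate chains}}.
\end{equation*}
To derive this I unfold the outer homotopy by \eqref{eq:def-h-rec} and use frontality of $\h_j$ to pull $s_0$ inside, obtaining $\h_i\,\h_j(a) = -\h'_i\,\h_j(a) + F'_i\,\h'_j\,s_0(a)$. The composition rule $(fg)' = f'g'$ combined with \Cref{thm:f-h-xyz} (which states $F_i\,\h_j = \h_j\,F_i$) gives $F'_i\,\h'_j = \h'_j\,F'_i$, and applying frontality of $F_i$ and then of $\h_j$ rewrites $F'_i\,\h'_j\,s_0(a)$ as $s_0\,\h_j\,F_i(a)$, which is manifestly degenerate. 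Expanding the surviving $-\h'_i\,\h_j(a)$ by applying the recursion to $\h_j$ produces $\h'_i\,\h'_j(a) - \h'_i\,F'_j\,s_0(a)$; the same manipulations render the second summand $s_0$-degenerate, and the first summand equals $(\h_i\,\h_j)'(a)$ by the composition rule.

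Summing the central claim over $(i,j)=(1,2)$ and $(2,1)$ then gives
\begin{equation*}
  (\h_1\h_2 + \h_2\h_1)(a) \equiv (\h_1\h_2 + \h_2\h_1)'(a) \pmod{\text{degenerate chains}}.
\end{equation*}
The inductive hypothesis asserts that $\h_1\h_2 + \h_2\h_1 \equiv 0$ at one degree lower, so \Cref{thm:derived}\,\ref{thm:derived-degenerate} makes the right-hand side degenerate, closing the induction. The base case in degree~$0$ is trivial since $\h=0$ there.

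The main obstacle is to confirm that the Alexander--Whitney map $\AW_i$ (and hence $F_i = \shuffle_i\,\AW_i$) is frontal; this is required when swapping $F'_i\,s_0 = s_0\,F_i$. Since both the shuffle map and $\h$ are explicitly identified as frontal in the excerpt, frontality of $\AW$ can be checked directly from the defining formula or extracted from~\cite{EilenbergMacLane:1953}. Once this is granted, everything else is routine manipulation via \Cref{thm:f-h-xyz}, \Cref{thm:derived}, the composition rule for derived operators, and the recursion~\eqref{eq:def-h-rec}.
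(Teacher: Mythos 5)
Your argument hinges on a false premise. In two places you invoke frontality of $F_i = \shuffle_i\,\AW_i$ in order to rewrite $F_i'\,s_0$ as $s_0\,F_i$ and conclude that $F_i'\,\h_j'\,s_0(a)$ and $\h_i'\,F_j'\,s_0(a)$ are degenerate. But $\AW$, and therefore $F$, is \emph{not} frontal. If $F$ satisfied $s_0\,F = F'\,s_0$, the recursion~\eqref{eq:def-h-rec} would give $H_n = -H_{n-1}' + s_0\,F_{n-1}$, and then by induction from $H_0=0$, using \Cref{thm:derived}\,\ref{thm:derived-degenerate}, every $H_n$ would be degenerate; normalizing would yield $\h=0$ and hence $\shuffle\AW = 1$, which is false. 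This also exposes a second red flag: your central claim $\h_i\h_j \equiv (\h_i\h_j)'$, if it held, would by the same induction force each composite $\h_i\h_j$ to vanish after normalization, a strictly stronger conclusion than the proposition (which asserts anticommutativity, not vanishing of each summand).

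The correct argument cannot kill the $s_0$-terms in $\h_1\h_2$ and in $\h_2\h_1$ separately; it must couple them. Using only~\eqref{eq:def-h-rec}, frontality of $\h$, and $(fg)'=f'g'$, one gets
\begin{align*}
  \h_1\h_2 &= (\h_1\h_2)' - (\h_1 F_2)'\,s_0 + (F_1\h_2)'\,s_0, \\
  \h_2\h_1 &= (\h_2\h_1)' - (\h_2 F_1)'\,s_0 + (F_2\h_1)'\,s_0.
\end{align*}
Adding and then applying \Cref{thm:f-h-xyz} (which gives $F_1\h_2\equiv\h_2 F_1$ and $F_2\h_1\equiv\h_1 F_2$) together with \Cref{thm:derived}\,\ref{thm:derived-degenerate}, the four $s_0$-terms cancel \emph{in pairs} across the two lines, leaving $\h_1\h_2 + \h_2\h_1 \equiv (\h_1\h_2+\h_2\h_1)'$, which the induction hypothesis renders degenerate. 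So: keep your inductive setup, drop the claim that $F$ is frontal, and replace the term-by-term vanishing with this paired cancellation between the two compositions.
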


\begin{proof}
  We proceed by induction on the degree~\(n\) of the (non-normalized) argument. The case~\(n=0\) is trivial since~\(H_{0}=0\).

  Using the recursive definition~\eqref{eq:def-h-rec} as well as formula~\eqref{eq:frontal}, we have for~\(n>0\)
  \begin{align}
    & \h_{X\times Y,Z}\,\h_{X,Y\times Z} = - \h_{X\times Y,Z}'\,\h_{X,Y\times Z} + F_{X\times Y,Z}'\,s_{0}\,\h_{X,Y\times Z} \\
    \notag &\: = \h_{X\times Y,Z}'\,\h_{X,Y\times Z}' - \h_{X\times Y,Z}'\,F_{X,Y\times Z}'\,s_{0} + F_{X\times Y,Z}'\,\h_{X,Y\times Z}'\,s_{0} \\
    \notag &\: = (\h_{X\times Y,Z}\,\h_{X,Y\times Z})' - (\h_{X\times Y,Z}\,F_{X,Y\times Z})'\,s_{0} + (F_{X\times Y,Z}\,\h_{X,Y\times Z})'\,s_{0}.
  \end{align}
  Analogously, we get
  \begin{multline}
    \h_{X,Y\times Z}\,\h_{X\times Y,Z} \\*
    = (\h_{X,Y\times Z}\,\h_{X\times Y,Z})' - (\h_{X,Y\times Z}\,F_{X\times Y,Z})'\,s_{0} + (F_{X,Y\times Z}\,\h_{X\times Y,Z})'\,s_{0}.
  \end{multline}
  Using the induction hypothesis together with \Cref{thm:f-h-xyz} and
  \Cref{thm:derived}\,\ref{thm:derived-degenerate} completes the proof.
\end{proof}

\section{The opposite Eilenberg--Zilber contraction}
\label{sec:ez-reversed}

In order to obtain another contraction~\((\AW,\shuffle,\hh)\),
we reverse the ``front'' and ``back'' of each simplex in a simplicial set~\(X\).
(This idea appears already in~\cite[Sec.~3.3]{Rubio:1991}.)
More precisely, we define the \newterm{opposite simplicial set}~\(\tX\) to be equal to~\(X\) as a graded set, but with new face and degeneracy map
\begin{equation}
  \tpartial_{k}\,x = \partial_{n-k}\,x,
  \qquad
  \ts_{k}\,x = s_{n-k}\,x
\end{equation}
for~\(x\in\tX_{n}=X_{n}\).
The associated differential~\(\td\) on the graded module \(C(\tX)=C(X)\) is related to the original one by
\begin{equation}
  \label{eq:d-td}
  \td\,x = (-1)^{\deg{x}}\, d\,x.
\end{equation}

We introduce the maps
\begin{align}
  \tiota_{X}\colon C(X) &\to C(\tX), & x &\mapsto (-1)^{\nu(\deg{x})}\,x \\
  \tT_{X,Y}\colon C(X)\otimes C(Y) &\to C(\tY)\otimes C(\tX), & x\otimes y &\mapsto (-1)^{\nu(\deg{x}+\deg{y})}\,y\otimes x
\end{align}
where \(\nu\colon\Z\to\Z_{2}\) is defined by
\begin{equation}
  \nu(n) = \frac{n(n+1)}{2}
  = \begin{cases}
    0 & \text{if \(n\equiv 0,3\pmod 4\),}\\
    1 & \text{if \(n\equiv 1,2\pmod 4\).}\\
  \end{cases}
\end{equation}

\begin{lemma}
  Both~\(\tiota_{X}\) and~\(\tT_{X,Y}\) are chain maps, natural in~\(X\) and~\(Y\).
\end{lemma}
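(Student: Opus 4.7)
The plan is to reduce both chain-map assertions to a single recursion for~$\nu$, namely
\[
  \nu(n)-\nu(n-1)=n,
\]
which gives the mod-$2$ identity~$\nu(n-1)\equiv\nu(n)+n\pmod 2$; everything else is a routine sign computation using~\eqref{eq:d-td}.

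First, for~$\tilde\iota_{X}$, I would take~$x\in C_{n}(X)$ and compute both composites. On one side,
\[
  \tilde d\,\tilde\iota_{X}(x) = (-1)^{\nu(n)}\,\tilde d\,x = (-1)^{\nu(n)+n}\,d\,x,
\]
using~\eqref{eq:d-td}, while on the other side~$\tilde\iota_{X}(d\,x) = (-1)^{\nu(n-1)}\,d\,x$. The recursion above shows these agree. Naturality in~$X$ is immediate: a simplicial map~$f\colon X\to X'$ coincides with its opposite~$\tilde f\colon\tilde X\to\tilde X'$ as a map of graded sets, and~$\tilde\iota$ only rescales by a sign that depends on degree alone.

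Next, for~$\tilde T_{X,Y}$, I would take~$x\in C_{p}(X)$ and~$y\in C_{q}(Y)$ and expand both sides using the Leibniz rule and the definitions:
\begin{align*}
  \tilde T(d(x\otimes y)) &= (-1)^{\nu(p+q-1)}\,y\otimes d\,x + (-1)^{p+\nu(p+q-1)}\,d\,y\otimes x,\\
  \tilde d\,\tilde T(x\otimes y) &= (-1)^{\nu(p+q)+p+q}\,y\otimes d\,x + (-1)^{\nu(p+q)+q}\,d\,y\otimes x,
\end{align*}
where in the second line I use~$\tilde d(y\otimes x)=\tilde d\,y\otimes x+(-1)^{q}\,y\otimes\tilde d\,x$ together with~\eqref{eq:d-td} applied to each factor. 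Matching the two pairs of signs reduces in both cases to the same recursion~$\nu(p+q)\equiv\nu(p+q-1)+(p+q)\pmod 2$ (for the second pair, after cancelling~$2q$). Naturality in~$X$ and~$Y$ again follows because only degree-dependent signs are involved.

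The main step, if one can call it that, is recognising that all four sign checks boil down to the single recursion for~$\nu$; no simplicial identities are needed because~$\tilde d$ is only a rescaling of~$d$ and the maps~$\tilde\iota$,~$\tilde T$ act by scalar signs. I do not anticipate any real obstacle beyond keeping track of the Koszul and~$\nu$ signs simultaneously.
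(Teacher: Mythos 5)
Your proof is correct and matches the paper, which simply states that the lemma ``is a direct verification''; you have filled in exactly those details, with the single recursion \(\nu(n)-\nu(n-1)=n\) doing all the sign-bookkeeping.
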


\begin{proof}
  This is a direct verification.
\end{proof}

We write
\begin{equation}
  \label{eq:def-tau}
  \tau_{X,Y}\colon X\times Y\to Y\times X
\end{equation}
for the canonical transposition and 
\begin{equation}
  \ttau_{X,Y} = \tiota_{Y\times X}\,(\tau_{X,Y})_{*} = (\tau_{\tX,\tY})_{*}\,\tiota_{X\times Y}\colon C(X\times Y)\to C(\tY\times\tX).
\end{equation}
Note that here we are using the canonical isomorphism~\(\widetilde{X\times Y}=\tX\times\tY\).

We will see that both the shuffle map~\(\shuffle\) and the Alexander--Whitney map~\(\AW\) are invariant
under the simplex reversal procedure just discussed, combined with the transposition of factors.
This does not hold for the Ei\-len\-berg--Mac\,Lane homotopy~\(\h\) because the two factors of~\(X\times Y\) are not treated
in a symmetric way. We therefore introduce the map
\begin{equation}
  \label{eq:def-hh}
  \hh=\hh_{X,Y}\colon C(X\times Y)\to C(X\times Y),
\end{equation}
\begin{equation*}
  \hh(x,y) = \sum_{0\le p+q < n}\:\sum_{(\alpha,\beta)\vdash(p,q+1)} (-1)^{p+q+(\alpha,\beta)}\,
  \bigl(s_{\beta}\,\partial_{p+1}^{p+q}\,x, s_{p+q+1}\,s_{\alpha}\,\partial_{0}^{p-1}\,y\bigr).
\end{equation*}
Note that the sign exponent is slightly simpler than in the formula~\eqref{eq:def-h} for~\(\h\).

To see how \(\hh\) is derived from~\(\h\),
we need to understand the effect of transposition and reversal on shuffles.
For a \((p,q)\)-shuffle~\((\alpha,\beta)\) we write
  \begin{equation*}
    (\talpha,\tbeta) = (m-\alpha,m-\beta) = \bigl(m-\alpha_{p},\dots,m-\alpha_{1},m-\beta_{q},\dots,m-\beta_{1}\bigr)
  \end{equation*}
for the shuffle obtained by reversing the sequence~\((0,\dots,p+q-1=m)\).
  
\begin{lemma}
  \label{thm:parity}
  Let \((\alpha,\beta)\) be a \((p,q)\)-shuffle, \(p\),~\(q\ge0\). Then
  \begin{equation*}
    (-1)^{(\beta,\alpha)} = (-1)^{(\talpha,\tbeta)} = (-1)^{(\alpha,\beta)+pq}.
  \end{equation*}
\end{lemma}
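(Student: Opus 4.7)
The plan is to interpret the signature $(-1)^{(\alpha,\beta)}$ as $(-1)^{I(\alpha,\beta)}$, where
\begin{equation*}
  I(\alpha,\beta) = \#\{\,(i,j)\in\{1,\dots,p\}\times\{1,\dots,q\} : \alpha_i > \beta_j\,\}
\end{equation*}
counts the ``between-block'' inversions. This is valid because each block $\alpha_1<\cdots<\alpha_p$ and $\beta_1<\cdots<\beta_q$ is individually sorted, so the only inversions contributing to the sign of the permutation $(\alpha_1,\dots,\alpha_p,\beta_1,\dots,\beta_q)$ come from mixed pairs.

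For the first equality I would observe that since $\{\alpha_i\}$ and $\{\beta_j\}$ partition $\{0,\dots,p+q-1\}$, no equality $\alpha_i=\beta_j$ ever occurs. Hence every one of the $pq$ pairs $(i,j)$ contributes either to $I(\alpha,\beta)$ or to $I(\beta,\alpha)$, which gives $I(\alpha,\beta)+I(\beta,\alpha)=pq$ and therefore $(-1)^{(\beta,\alpha)}=(-1)^{(\alpha,\beta)+pq}$.

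For the second equality, write $m=p+q-1$ and note that $\talpha_i=m-\alpha_{p-i+1}$ and $\tbeta_j=m-\beta_{q-j+1}$; these sequences are indeed increasing because $\alpha$ and $\beta$ are. Now an inequality $\talpha_i>\tbeta_j$ is equivalent to $\alpha_{p-i+1}<\beta_{q-j+1}$. Substituting $k=p-i+1$ and $l=q-j+1$ shows that $I(\talpha,\tbeta)$ equals the number of pairs $(k,l)$ with $\alpha_k<\beta_l$, which is again $pq-I(\alpha,\beta)$ by the disjointness argument from the previous paragraph. So $(-1)^{(\talpha,\tbeta)}=(-1)^{(\alpha,\beta)+pq}$ as well.

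There is no serious obstacle: the lemma is a compact combinatorial statement about sorted sequences and permutation signs. The only point that deserves a little care is the reindexing step for the reversed shuffle, but once one observes that reversal turns each ``$\alpha_i>\beta_j$'' into ``$\alpha_{k}<\beta_{l}$'' for the reindexed pair, the same count $pq-I(\alpha,\beta)$ appears in both identities, and the result falls out immediately.
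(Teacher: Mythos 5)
Your proof is correct, and it takes a somewhat more unified route than the paper's. The paper proves the two equalities by separate means: for $(-1)^{(\beta,\alpha)}$ it observes that $(\beta,\alpha)$ is carried to $(\alpha,\beta)$ by $pq$ adjacent transpositions, while for $(-1)^{(\talpha,\tbeta)}$ it uses the fact that reversing a sequence of length $k$ has sign $(-1)^{\binom{k}{2}}$ and computes the exponent $\binom{p+q}{2}-\binom{p}{2}-\binom{q}{2}=pq$ from reversing $\alpha$, $\beta$, and the ambient set. Your inversion-count framing reduces both equalities to a single observation — each of the $pq$ mixed pairs $(i,j)$ contributes to exactly one of $I(\alpha,\beta)$ and $I(\beta,\alpha)$, and likewise to exactly one of $I(\alpha,\beta)$ and $I(\talpha,\tbeta)$ after the reindexing $(i,j)\mapsto(p-i+1,q-j+1)$. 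That makes the common origin of the factor $pq$ visibly the same in both cases, at the cost of having to justify the identification of $(-1)^{(\alpha,\beta)}$ with $(-1)^{I(\alpha,\beta)}$ (which you do correctly, since the two blocks are internally sorted). Both arguments are elementary and of comparable length.
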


\begin{proof}
  Applying \(pq\)~transpositions transforms~\((\beta,\alpha)\) into~\((\alpha,\beta)\).
  For~\((\talpha,\tbeta)\) we can reverse the sequences \(\alpha\), \(\beta\) and the whole set, so that the sign exponent changes by
  \begin{equation}
    \textstyle{\frac{1}{2}}\,(p+q)(p+q-1)-\textstyle{\frac{1}{2}}\,p(p-1)-\textstyle{\frac{1}{2}}\,q(q-1) = pq. \qedhere
  \end{equation}
\end{proof}

\begin{lemma}
  \label{thm:shuffle-aw-h-hh}
  The following diagrams commute.
  \tikzcdset{diagrams={ampersand replacement=\&,column sep=large}}
  \begin{gather*}
    \begin{tikzcd}
      C(X\times Y) \arrow{d}{\ttau_{X,Y}} \arrow{r}{\AW_{X,Y}} \&  C(X)\otimes C(Y) \arrow{d}{\tT_{X,Y}} \\
      C(\tY\times\tX) \arrow{r}{\AW_{\tY,\tX}} \& C(\tY)\otimes C(\tX)
    \end{tikzcd}
    \\
    \begin{tikzcd}
      C(X)\otimes C(Y) \arrow{d}{\tT_{X,Y}} \arrow{r}{\shuffle_{X,Y}} \& C(X\times Y) \arrow{d}{\ttau_{X,Y}} \\
      C(\tY)\otimes C(\tX) \arrow{r}{\shuffle_{\tY,\tX}} \& C(\tY\times\tX)
    \end{tikzcd}
    \\
    \begin{tikzcd}
      C(X\times Y) \arrow{d}{\ttau_{X,Y}} \arrow{r}{\h_{X,Y}} \& C(X\times Y) \arrow{d}{\ttau_{X,Y}} \\
      C(\tY\times\tX) \arrow{r}{\hh_{\tY,\tX}} \& C(\tY\times\tX)
    \end{tikzcd}
  \end{gather*}
\end{lemma}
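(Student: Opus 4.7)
My plan is to prove all three diagrams by direct computation with the explicit formulas for $\AW$, $\shuffle$, $\h$ and $\hh$, expanding both composites on a typical simplex $(x,y)\in X_n\times Y_n$ and matching terms. Two observations make this systematic. First, iterated face and degeneracy operators on the reversed simplicial sets $\tX$ and $\tY$ convert back to operators on $X$ and $Y$ in a predictable way: the ``front'' iterated face in $\tY$ corresponds to the ``back'' iterated face in $Y$, e.g., $\tpartial_{j+1}^n\,y = \partial_0^{n-j-1}\,y$ and $\tpartial_0^{j-1}\,x = \partial_{n-j+1}^n\,x$, with analogous rewrites for iterated degeneracies. Second, \Cref{thm:parity} supplies the sign shifts needed to transport shuffles between $(p,q)$- and $(q,p)$-types, and combines with the elementary identity $\nu(a+b)\equiv\nu(a)+\nu(b)+ab\pmod 2$.

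For the $\AW$ diagram, after reindexing via $k = n - j$, the sum $\AW_{\tY,\tX}\,\ttau_{X,Y}(x,y)$ becomes $(-1)^{\nu(n)} \sum_{k=0}^n \partial_0^{k-1}\,y \otimes \partial_{k+1}^n\,x$, which equals $\tT_{X,Y}\,\AW_{X,Y}(x,y)$ because the tensor factors $\partial_{k+1}^n\,x$ and $\partial_0^{k-1}\,y$ have degrees summing to $n$, so $\tT_{X,Y}$ contributes the same prefactor $(-1)^{\nu(n)}$. The $\shuffle$ diagram follows the same pattern: each $(p,q)$-shuffle $(\alpha,\beta)$ pairs with the $(q,p)$-shuffle $(\tbeta,\talpha)$ after factor transposition and simplex reversal, and two applications of \Cref{thm:parity} show $(-1)^{(\tbeta,\talpha)} = (-1)^{(\alpha,\beta)}$, while the $\nu$-prefactors $(-1)^{\nu(p+q)}$ on the two sides coincide.

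The third diagram is more delicate but purely computational. I expand $\hh_{\tY,\tX}\,\ttau_{X,Y}(x,y) = (-1)^{\nu(n)}\,\hh_{\tY,\tX}(y,x)$ using \eqref{eq:def-hh} and translate every $\tpartial$ and $\ts$ back to operators on $Y$ and $X$. Each summand in \eqref{eq:def-hh}, indexed by a $(p',q'+1)$-shuffle with $p' + q' < n$, is matched with the summand in \eqref{eq:def-h} obtained by swap-and-reverse, which is a $(p+1,q)$-shuffle with $p = q'$ and $q = p'$. The main obstacle is sign bookkeeping: one must reconcile $(-1)^{\nu(n+1)}$, coming from $\ttau_{X,Y}$ applied to the degree-$(n+1)$ chain $\h(x,y)$, with the $(-1)^{\nu(n)}$ on the other side, via the identity $\nu(n+1) \equiv \nu(n) + n + 1 \pmod 2$. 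Since $m = n - p - q$ and $p' + q' = p + q$, the exponent $m + 1 + (\alpha,\beta)$ from \eqref{eq:def-h} and the exponent $p' + q' + (\alpha',\beta')$ from \eqref{eq:def-hh} differ by $n + 1 + 2(p+q) \equiv n+1\pmod 2$, which exactly absorbs the $\nu$-discrepancy; the equality $(-1)^{(\alpha,\beta)} = (-1)^{(\alpha',\beta')}$ follows once more from \Cref{thm:parity}. No new idea beyond careful accounting is required.
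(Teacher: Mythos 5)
Your proposal is correct and takes essentially the same approach as the paper: a direct, term-by-term computation with the explicit formulas, invoking Lemma~\ref{thm:parity} for the shuffle signatures and the congruence $\nu(m+n)\equiv\nu(m)+\nu(n)+mn\pmod 2$ (here in the form $\nu(n+1)\equiv\nu(n)+n+1$) to reconcile the $\nu$-prefactors of $\ttau$ and $\tT$ with the extra $(-1)^{n+1}$ by which the sign exponents in~\eqref{eq:def-h} and~\eqref{eq:def-hh} differ.
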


\begin{proof}
  This is a direct computation.
  For the parity of the shuffles appearing in the diagrams for~\(\shuffle\) and~\(\h\) one uses \Cref{thm:parity}.
  
  The additional sign~\((-1)^{n+1}\) in the definition~\eqref{eq:def-h} of~\(\h\) compared to~\eqref{eq:def-hh} 
  compensates for the signs introduced by the map~\(\ttau_{X,Y}\).
  This is analogous to the sign appearing in the identity~\eqref{eq:d-td} relating the two differentials.
  For~\(\hh\) the sign is \((-1)^{n+1}\) instead of~\((-1)^{n}\) because \(\h\) is of degree~\(+1\) while \(d\) is of degree~\(-1\).
\end{proof}

\begin{proposition}
  \label{thm:ez-contr-hh}
  The triple~\((\AW,\shuffle,\hh)\) is a contraction.
\end{proposition}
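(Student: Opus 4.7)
The plan is to transfer the five contraction identities \eqref{eq:ez-contraction} for $(\AW,\shuffle,\h)$ over to $(\AW,\shuffle,\hh)$ by conjugating through the chain isomorphisms $\tT_{X,Y}$ and $\ttau_{X,Y}$ supplied by \Cref{thm:shuffle-aw-h-hh}. Both of these are invertible chain maps: $\tiota$ is an involution (since $2\,\nu(n)\equiv 0\pmod 2$) and $\tau_{X,Y}$ has the simplicial inverse $\tau_{Y,X}$, so $\tT_{X,Y}$ and $\ttau_{X,Y}$ both admit explicit inverses.

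Rewriting the three commutative squares of \Cref{thm:shuffle-aw-h-hh} as
\begin{align*}
  \AW_{\tY,\tX} &= \tT_{X,Y}\,\AW_{X,Y}\,\ttau_{X,Y}^{-1}, \\
  \shuffle_{\tY,\tX} &= \ttau_{X,Y}\,\shuffle_{X,Y}\,\tT_{X,Y}^{-1}, \\
  \hh_{\tY,\tX} &= \ttau_{X,Y}\,\h_{X,Y}\,\ttau_{X,Y}^{-1},
\end{align*}
each of the five identities \eqref{eq:ez-contraction} transports by inserting $\tT^{-1}\tT$ or $\ttau^{-1}\ttau$ between consecutive factors. For example,
\begin{equation*}
  \AW_{\tY,\tX}\,\shuffle_{\tY,\tX} = \tT\,(\AW\,\shuffle)\,\tT^{-1} = 1,
\end{equation*}
and, because $\ttau$ is a chain map,
\begin{equation*}
  \shuffle_{\tY,\tX}\,\AW_{\tY,\tX} = \ttau\,(\shuffle\,\AW)\,\ttau^{-1} = 1 + \ttau\,d(\h)\,\ttau^{-1} = 1 + d(\hh_{\tY,\tX}).
\end{equation*}
The remaining three identities $\hh\,\shuffle = 0$, $\AW\,\hh = 0$ and $\hh\,\hh = 0$ follow in exactly the same fashion. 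This proves that $(\AW_{\tY,\tX},\shuffle_{\tY,\tX},\hh_{\tY,\tX})$ is a contraction for every pair of simplicial sets $X$ and $Y$; since reversal is an involution ($\tilde{\tX} = X$), every pair of simplicial sets is of this form, so the statement is established.

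There is no real obstacle: the proof is purely formal once \Cref{thm:shuffle-aw-h-hh} is in hand. The one point that might look delicate is the transport of $d(\h)$ through $\ttau$, since the signs $(-1)^{\nu(n)}$ and the reversal-adjusted differential $\td$ of \eqref{eq:d-td} enter the definitions. However, this is handled entirely by the fact that $\ttau$ is a chain map in the usual graded sense, so $\ttau\,d(\h) = d(\hh_{\tY,\tX})\,\ttau$; the author's careful choice of the sign exponent in \eqref{eq:def-hh} (compared with \eqref{eq:def-h}) is precisely what makes this hold, as already noted at the end of the proof of \Cref{thm:shuffle-aw-h-hh}.
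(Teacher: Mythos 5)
Your proof is correct and follows exactly the argument the paper has in mind; the paper's proof is the one-line remark ``Apply \Cref{thm:shuffle-aw-h-hh} to the contraction~$(\AW,\shuffle,\h)$,'' and you have simply unwound it, conjugating each of the five identities~\eqref{eq:ez-contraction} by the chain isomorphisms $\ttau$ and $\tT$ and invoking the involutivity of the reversal to cover all pairs of simplicial sets.
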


\begin{proof}
  Apply \Cref{thm:shuffle-aw-h-hh} to the contraction~\((\AW,\shuffle,\h)\).
\end{proof}

\begin{proposition}
  \tikzcdset{diagrams={ampersand replacement=\&,column sep=huge}}
  \label{thm:aw-hh}
  The following diagrams commute.
  \begin{gather*}
    \tag{\(\mathrm{\tilde A}_{1}\)}
    \label{eq:aw-hh}
    \begin{tikzcd}
      C(X\times Y\times Z) \arrow{d}{\hh_{X\times Y,Z}} \arrow{r}{\AW_{X,Y\times Z}} \& C(X)\otimes C(Y\times Z)\arrow{d}{1\otimes\hh_{Y,Z}} \\
      C(X\times Y\times Z) \arrow{r}{\AW_{X,Y\times Z}} \& C(X)\otimes C(Y\times Z)
    \end{tikzcd}
    \\
    \tag{\(\mathrm{\tilde A}_{2}\)}
    \label{eq:hh-aw}
    \begin{tikzcd}[column sep=huge]
      C(X\times Y\times Z) \arrow{d}{\hh_{X,Y\times Z}} \arrow{r}{\AW_{X\times Y,Z}} \& C(X\times Y)\otimes C(Z) \arrow{d}{\hh_{X,Y}\otimes 1} \\
      C(X\times Y\times Z) \arrow{r}{\AW_{X\times Y,Z}} \& C(X\times Y)\otimes C(Z)
    \end{tikzcd}    
  \end{gather*}
\end{proposition}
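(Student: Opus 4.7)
The plan is to prove the two diagrams by a direct computation in the non-normalized complex, modulo degenerate chains, using the explicit formula \eqref{eq:def-hh} for $\hh$ together with the definition of $\AW$; this is exactly the strategy promised in the discussion following \Cref{thm:shuffle-h}. We focus on the diagram \eqref{eq:hh-aw}; the proof of \eqref{eq:aw-hh} is entirely analogous, with the roles of the front faces $\partial_0^{p-1}$ and the middle faces $\partial_{p+1}^{p+q}$ (and of the shuffle parts $\alpha$ and $\beta$) interchanged. The same argument, adapted via \Cref{thm:shuffle-aw-h-hh} or by rewriting with the slightly less transparent signs of \eqref{eq:def-h}, then yields \Cref{thm:aw-h}, as was announced in \Cref{sec:ez}.

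Let $(x, y, z) \in X_n \times Y_n \times Z_n$. Expanding $\AW_{X \times Y, Z}\,\hh_{X, Y \times Z}(x, y, z)$ produces a sum over tuples $(p, q, \alpha, \beta, k)$ with $p + q < n$, $(\alpha, \beta) \vdash (p, q+1)$, and $k \in \{0, \ldots, n+1\}$, in which the $Z$-factor of each summand is $\partial_0^{k-1}\,s_{p+q+1}\,s_\alpha\,\partial_0^{p-1}\,z$. The simplicial identities $\partial_0^{k-1}\,s_{p+q+1} = s_{p+q+2-k}\,\partial_0^{k-1}$ for $k \le p+q+2$ (which forces an outer degeneracy) and $\partial_0^{k-1}\,s_{p+q+1} = \partial_0^{k-2}$ for $k \ge p+q+3$ (which cancels $s_{p+q+1}$ against a face) already eliminate most tuples, and further interactions of $\partial_0$ with $s_\alpha$ restrict the surviving non-degenerate ones. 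Meanwhile, expanding $(\hh_{X,Y} \otimes 1)\,\AW_{X \times Y, Z}(x, y, z)$ gives a sum over $(k', p', q', \alpha', \beta')$ with $p' + q' < k' \le n$ and $(\alpha', \beta') \vdash (p', q'+1)$, whose $Z$-factor is the already canonical $\partial_0^{k'-1}\,z$.

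The crux is to exhibit a sign-preserving bijection between the non-degenerate index tuples on the two sides, identifying the shuffles and relating $k$ and $k'$ by a linear change of variables, and to verify that under this reindexing the $X$-, $Y$-, and $Z$-components coincide in canonical form while the sign exponents $p + q + (\alpha, \beta)$ align directly. The main obstacle is the combinatorial bookkeeping: identifying precisely which tuples yield non-degenerate contributions and checking compatibility of the resulting face--degeneracy compositions under the bijection. \Cref{thm:parity} and \Cref{thm:derived} help with the shuffle-sign and derived-operator arithmetic, but the bulk of the work is a careful case analysis, made feasible by the non-recursive nature of \eqref{eq:def-hh}.
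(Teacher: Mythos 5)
Your high-level plan — expand both composites in the non-normalized complex, discard degenerate terms, and match the surviving summands — is exactly the paper's strategy, and the observation that $\partial_0^{k-1}\,s_{p+q+1}$ either produces an outer degeneracy or collapses to $\partial_0^{k-2}$ is the right starting point. However, your thresholds are off by one: $\partial_0^{k-1}\,s_{p+q+1} = s_{p+q+1-k}\,\partial_0^{k-1}$ (not $s_{p+q+2-k}$) holds for $k\le p+q+1$, and the cancellation $\partial_0^{k-1}\,s_{p+q+1}=\partial_0^{k-2}$ already kicks in at $k\ge p+q+2$, not $k\ge p+q+3$. In particular you misclassify the case $k=p+q+2$ as degenerate, which would make the sets of surviving terms fail to match. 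Once corrected, the bijection for~\eqref{eq:hh-aw} is simply the reindexing $k\mapsto k+1$ with the shuffle untouched, which you should state rather than leave as ``the crux''.

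The more substantive gap is your claim that~\eqref{eq:aw-hh} is ``entirely analogous, with the roles of $\partial_0^{p-1}$ and $\partial_{p+1}^{p+q}$ (and of $\alpha$ and $\beta$) interchanged.'' That is not what happens, and a naive role swap will not close the argument. For~\eqref{eq:aw-hh} the obstruction to degeneracy sits in the $X$-factor $\partial_{k+1}^{n+1}\,s_{\beta}\,\partial_{p+1}^{p+q}\,x$, which is degenerate precisely when $\beta$ contains a value $<k$; so the surviving shuffles are those with $\{0,\dots,k-1\}\subset\alpha$, forcing $k\le p$. The matching then requires a \emph{shift} reindexing $\hat p=p-k$, $\hat\alpha=\{\,i-k\mid k\le i\in\alpha\,\}$, $\hat\beta=\beta-k$, together with the fact that $(\hat\alpha,\hat\beta)$ and $(\alpha,\beta)$ have the same signature, so that $p+q+(\alpha,\beta)=k+\hat p+q+(\hat\alpha,\hat\beta)$ reproduces the extra Koszul sign $(-1)^k$ coming from $1\otimes\hh_{Y,Z}$. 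None of this is captured by ``interchange $\alpha$ and $\beta$,'' and without it the bijection and sign check for~\eqref{eq:aw-hh} do not go through.
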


\begin{proof}
  We start with the second diagram and
  consider \((x,y,z)\in(X\times Y\times Z)_{n}\) for some~\(n\ge0\). Applying the definitions for non-normalized chains, we get
  \begin{align}
    \label{eq:pf-hh-1-AW}
    \MoveEqLeft (\hh_{X,Y}\otimes 1)\,\AW_{X\times Y,Z}(x,y,z) \\*
    \notag = & \sum_{k=1}^{n}\:\sum_{0\le p+q<k}\:\sum_{(\alpha,\beta)\vdash(p,q+1)} (-1)^{p+q+(\alpha,\beta)} \\*
    \notag & \qquad\qquad\qquad {}\cdot \bigl( s_{\beta}\,\partial_{p+1}^{p+q}\,\partial_{k+1}^{n}\,x,
    s_{p+q+1}\,s_{\alpha}\,\partial_{0}^{p-1}\,\partial_{k+1}^{n}\,y \bigr)
    \otimes \partial_{0}^{k-1}\,z \\
  \intertext{(there are no terms for~\(k=0\)) and}
    \label{eq:pf-AW-hh-bis}
    \MoveEqLeft \AW_{X\times Y,Z}\,\hh_{X,Y\times Z}(x,y,z) \\*
    \notag = & \sum_{k=0}^{n+1}\:\sum_{0\le p+q<n}\:\sum_{(\alpha,\beta)\vdash(p,q+1)} (-1)^{p+q+(\alpha,\beta)} \\*
    \notag & \quad {}\cdot \bigl( \partial_{k+1}^{n+1}\,s_{\beta}\,\partial_{p+1}^{p+q}\,x,
    \partial_{k+1}^{n+1}\,s_{p+q+1}\,s_{\alpha}\,\partial_{0}^{p-1}\,y \bigr)
    \otimes \partial_{0}^{k-1}\,s_{p+q+1}\,s_{\alpha}\,\partial_{0}^{p-1}\,z.
  \end{align}
  The second tensor factor in the last line is degenerate if \(k-1<p+q+1\). 
  We may therefore assume \(k>p+q+1\ge 1\), in which case this term can be written as
  \begin{align}
    \partial_{0}^{k-1}\,s_{p+q+1}\,s_{\alpha}\,\partial_{0}^{p-1}\,z
    &= \partial_{0}^{k-p-2}\,\partial_{0}^{p-1}\,z
    = \partial_{0}^{k-2}\,z. \\
  \intertext{The components of the first tensor factor in~\eqref{eq:pf-AW-hh-bis} can likewise be transformed to}
    \partial_{k+1}^{n+1}\,s_{\beta}\,\partial_{p+1}^{p+q}\,x
    &= s_{\beta}\,\partial_{k-q}^{n-q}\,\partial_{p+1}^{p+q}\,x
    = s_{\beta}\,\partial_{p+1}^{p+q}\,\partial_{k}^{n}\,x, \\
    \partial_{k+1}^{n+1}\,s_{p+q+1}\,s_{\alpha}\,\partial_{0}^{p-1}\,y
    &= s_{p+q+1}\,s_{\alpha}\,\partial_{k-p}^{n-p}\,\partial_{0}^{p-1}\,y
    = s_{p+q+1}\,s_{\alpha}\,\partial_{0}^{p-1}\,\partial_{k}^{n}\,y.
  \end{align}
  Modulo degenerate chains this gives
  \begin{align}
    \MoveEqLeft \AW_{X\times Y,Z}\,\hh_{X,Y\times Z}(x,y,z) \\*
    \notag \equiv & \sum_{0\le p+q<n}\:\sum_{k=p+q+2}^{n+1}\:\sum_{(\alpha,\beta)\vdash(p,q+1)} (-1)^{p+q+(\alpha,\beta)} \\*
    \notag & \qquad\qquad\qquad\qquad {}\cdot \bigl( s_{\beta}\,\partial_{p+1}^{p+q}\,\partial_{k}^{n}\,x,
    s_{p+q+1}\,s_{\alpha}\,\partial_{0}^{p-1}\,\partial_{k}^{n}\,y \bigr)
    \otimes \partial_{0}^{k-2}\,z.
  \end{align}
  This is the same as~\eqref{eq:pf-hh-1-AW} after substituting \(k+1\) for~\(k\).

  We now turn to the first diagram, where the argument is similar. We have
  \begin{align}
    \label{eq:pf-1-hh-AW}
    \MoveEqLeft (1\otimes \hh_{Y,Z})\,AW_{X,Y\times Z}(x,y,z) \\*
    \notag = & \sum_{k=0}^{n}\:\sum_{0\le p+q<n-k}\:\sum_{(\alpha,\beta)\vdash(p,q+1)}(-1)^{k+p+q+(\alpha,\beta)} \\*
    \notag & \qquad\qquad\qquad {}\cdot \partial_{k+1}^{n}\,x \otimes
    \bigl( s_{\beta}\,\partial_{p+1}^{p+q}\,\partial_{0}^{k-1}\,y,
    s_{p+q+1}\,s_{\alpha}\,\partial_{0}^{p-1}\,\partial_{0}^{k-1}\,z \bigr) \\
    \notag = & \sum_{0\le k+p+q<n}\:\sum_{(\alpha,\beta)\vdash(p,q+1)}(-1)^{k+p+q+(\alpha,\beta)} \\*
    \notag & \qquad\qquad\qquad {}\cdot \partial_{k+1}^{n}\,x \otimes
    \bigl( s_{\beta}\,\partial_{p+1}^{p+q}\,\partial_{0}^{k-1}\,y,
    s_{p+q+1}\,s_{\alpha}\,\partial_{0}^{k+p-1}\,z \bigr)
  \end{align}
  and
  \begin{align}
    \label{eq:pf-AW-hh}
    \MoveEqLeft \AW_{X,Y\times Z}\,\hh_{X\times Y, Z}(x,y,z) \\*
    \notag = & \sum_{k=0}^{n+1}\sum_{0\le p+q<n}\sum_{(\alpha,\beta)\vdash(p,q+1)}(-1)^{p+q+(\alpha,\beta)} \\*
    \notag & \qquad\qquad {}\cdot \partial_{k+1}^{n+1}\,s_{\beta}\,\partial_{p+1}^{p+q}\,x \otimes
    \bigl( \partial_{0}^{k-1}\,s_{\beta}\,\partial_{p+1}^{p+q}\,y,
    \partial_{0}^{k-1}\,s_{p+q+1}\,s_{\alpha}\,\partial_{0}^{p-1}\,z \bigr).
  \end{align}
  The first tensor factor in~\eqref{eq:pf-AW-hh} is degenerate if \(\beta\) contains a value~\(< k\).
  We can therefore assume all such values to occur in~\(\alpha\), so that in particular we obtain \(k\le p<n\).
  Since we also have \(n-q\ge p+1\), the first tensor factor simplifies to
  \begin{align}
    \partial_{k+1}^{n+1}\,s_{\beta}\,\partial_{p+1}^{p+q}\,x
    &= \partial_{k+1}^{n-q}\,\partial_{p+1}^{p+q}\,x = \partial_{k+1}^{n}\,x
  \intertext{in this case.
  Let us write \(\hat p=p-k\) as well as \(\hat \alpha = \{\,i-k \mid k\le i\in\alpha\,\}\) and~\(\hat\beta=\beta-k\).
  The first component of the second tensor factor in~\eqref{eq:pf-AW-hh} can be expressed as}
    \partial_{0}^{k-1}\,s_{\beta}\,\partial_{p+1}^{p+q}\,y
    &= s_{\hat\beta}\,\partial_{0}^{k-1}\,\partial_{p+1}^{p+q}\,y
    = s_{\hat\beta}\,\partial_{\hat p+1}^{\hat p+q}\,\partial_{0}^{k-1}\,y
  \intertext{and the second component as}
    \partial_{0}^{k-1}\,s_{p+q+1}\,s_{\alpha}\,\partial_{0}^{p-1}\,z
    &= s_{\hat p+q+1}\,\partial_{0}^{k-1}\,s_{\alpha}\,\partial_{0}^{p-1}\,z
    = s_{\hat p+q+1}\,s_{\hat\alpha}\,\partial_{0}^{k+\hat p-1}\,z.
  \end{align}
  Modulo degenerate chains we therefore have
  \begin{align}
    \MoveEqLeft \AW_{X,Y\times Z}\,\hh_{X\times Y, Z}(x,y,z) \\*
    \notag \equiv & \sum_{k=0}^{p}\:\sum_{0\le p+q<n}\:\sum_{\substack{(\alpha,\beta)\vdash(p,q+1)\\\{0,\dots,k-1\}\subset\alpha}}(-1)^{p+q+(\alpha,\beta)} \\*
    \notag & \qquad\qquad\qquad\quad {}\cdot \partial_{k+1}^{n}\,x \otimes
    \bigl( s_{\hat\beta}\,\partial_{\hat p+1}^{\hat p+q}\,\partial_{0}^{k-1}\,y,
    s_{\hat p+q+1}\,s_{\hat\alpha}\,\partial_{0}^{k+\hat p-1}\,z \bigr) \\
    \notag = & \sum_{0\le k+\hat p+q<n}\:\sum_{(\hat\alpha,\hat\beta)\vdash(\hat p,q+1)}(-1)^{k+\hat p+q+(\hat\alpha,\hat\beta)} \\*
    \notag & \qquad\qquad\qquad\quad {}\cdot \partial_{k+1}^{n}\,x \otimes
    \bigl( s_{\hat\beta}\,\partial_{\hat p+1}^{\hat p+q}\,\partial_{0}^{k-1}\,y,
    s_{\hat p+q+1}\,s_{\hat\alpha}\,\partial_{0}^{k+\hat p-1}\,z \bigr),
  \end{align}
  which matches \eqref{eq:pf-1-hh-AW}.
\end{proof}

\begin{proof}[Proof of \Cref{thm:aw-h}]
  The top of the cube
  \begin{equation}
    \label{eq:cube-hh-aw}
    \begin{tikzcd}[column sep=-0.1em,row sep=2.5em]
      C(X\times Y\times Z) \arrow{dd} \arrow{rd}{\h_{X,Y\times X}} \arrow{rr}{\AW_{X\times Y,Z}} & & C(X\times Y)\otimes C(Z) \arrow{dd} \arrow{rd}{\h_{X,Y}\otimes 1} \\
      & C(X\times Y\times Z) \arrow[crossing over]{rr} & & C(X\times Y)\otimes C(Z) \arrow{dd} \\
      C(\tZ\times \tY\times \tX) \arrow{rr} \arrow{rd}[left]{\hh_{\tZ\times \tY,\tX}} & & C(\tZ)\otimes C(\tY\times \tX) \arrow{rd}{1\otimes\hh_{\tY,\tX}} \\
      & C(\tZ\times \tY\times \tX) \arrow[from=uu,crossing over] \arrow{rr}{\AW_{\tZ,\tY\times \tX}} & & C(\tZ)\otimes C(\tY\times \tX) 
    \end{tikzcd}
  \end{equation}
  is the commutative diagram~\eqref{eq:h-aw}, and the bottom is a relabelling of~\eqref{eq:aw-hh}.
  The front and back consist of the diagram
  \begin{equation}
    \begin{tikzcd}[column sep=huge]
      C(X\times Y\times Z) \arrow{d}[left]{\ttau_{X\times Y,Z}} \arrow{r}{\AW_{X\times Y,Z}} & C(X\times Y)\otimes C(Z) \arrow{d}{\tT_{X\times Y,Z}} \\
      C(\tZ\times\tX\times\tY) \arrow{d}[left]{(\id_{\tZ},\tau_{\tX,\tY})_{*}} \arrow{r}{\AW_{\tZ,\tX\times\tY}} & C(\tZ)\otimes C(\tX\times\tY) \arrow{d}{1\otimes(\tau_{\tX,\tY})_{*}} \\
      C(\tZ\times\tY\times\tX) \arrow{r}{\AW_{\tZ,\tY\times\tX}} & C(\tZ)\otimes C(\tY\times\tX) \mathrlap{.}
    \end{tikzcd}
  \end{equation}
  The left side is the diagram
  \begin{equation}
    \begin{tikzcd}[column sep=huge]
      C(X\times Y\times Z) \arrow{d}[left]{\ttau_{X,Y\times Z}} \arrow{r}{\h_{X\times Y,Z}} & C(X\times Y\times Z) \arrow{d}{\ttau_{X,Y\times Z}} \\
      C(\tY\times\tZ\times\tX) \arrow{d}[left]{(\tau_{\tY,\tZ},\id_{Z})_{*}} \arrow{r}{\hh_{\tY\times\tZ,\tX}} & C(\tY\times\tZ\times\tX) \arrow{d}{(\tau_{\tY,\tZ},\id_{Z})_{*}} \\
      C(\tZ\times \tY\times \tX) \arrow{r}{\hh_{\tZ\times\tY,\tX}} & C(\tZ\times \tY\times \tX) \mathrlap{,}
    \end{tikzcd}
  \end{equation}
  and the right side is
  \begin{equation}
    \begin{tikzcd}[column sep=huge]
      C(X\times Y)\otimes C(Z) \arrow{d}[left]{\ttau_{X,Y}\otimes\tiota_{Z}} \arrow{r}{\h_{X,Y}\otimes1} & C(X\times Y)\otimes C(Z) \arrow{d}{\ttau_{X,Y}\otimes\tiota_{Z}} \\
      C(\tY\times\tX)\otimes C(\tZ) \arrow{d}[left]{T_{C(\tY\times\tX),C(\tZ)}} \arrow{r}{\hh_{\tY,\tX}\otimes1} & C(\tY\times\tX)\otimes C(\tZ) \arrow{d}{T_{C(\tY\times\tX),C(\tZ)}} \\
      C(\tZ)\otimes C(\tY\times\tX) \arrow{r}{1\otimes\hh_{\tY,\tX}} & C(\tZ)\otimes C(\tY\times\tX) \mathrlap{.}
    \end{tikzcd}
  \end{equation}
  All four sides commute by \Cref{thm:shuffle-aw-h-hh} and naturality.
  One verifies directly that the composed vertical maps of the various diagrams agree wherever needed.
  To see that the signs work out correctly, one uses the identity
  \begin{equation}
    \nu(m+n) \equiv \nu(m) + \nu(n) +mn \pmod{2}
  \end{equation}
  for~\(m\),~\(n\ge0\). This defines the vertical arrows in~\eqref{eq:cube-hh-aw} and completes the argument.
  
  For~\eqref{eq:aw-h} one performs a similar diagram chase,
  this time starting with the commutative diagram~\eqref{eq:hh-aw}.
\end{proof}

\begin{proposition}
  \tikzcdset{diagrams={ampersand replacement=\&,column sep=huge}}
  \label{thm:shuffle-hh}
  The following diagrams commute.
  \begin{gather*}
    \tag{\(\mathrm{\tilde B}_{1}\)}
    \label{eq:shuffle-hh}
    \begin{tikzcd}[column sep=huge]
      C(X)\otimes C(Y\times Z) \arrow{d}{1\otimes\hh_{Y\times Z}} \arrow{r}{\shuffle_{X,Y\times Z}} \& C(X\times Y\times Z) \arrow{d}{\hh_{X\times Y,Z}} \\
      C(X)\otimes C(Y\times Z) \arrow{r}{\shuffle_{X,Y\times Z}} \& C(X\times Y\times Z)
    \end{tikzcd}
    \\
    \tag{\(\mathrm{\tilde B}_{2}\)}
    \label{eq:hh-shuffle}
    \begin{tikzcd}
      C(X\times Y)\otimes C(Z) \arrow{d}{\hh_{X\times Y}\otimes 1} \arrow{r}{\shuffle_{X\times Y,Z}} \& C(X\times Y\times Z) \arrow{d}{\hh_{X,Y\times Z}} \\
      C(X\times Y)\otimes C(Z) \arrow{r}{\shuffle_{X\times Y,Z}} \& C(X\times Y\times Z)
    \end{tikzcd}
  \end{gather*}
\end{proposition}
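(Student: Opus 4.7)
The plan is to derive \Cref{thm:shuffle-hh} from the already-established \Cref{thm:shuffle-h} by the same cube-diagram chase that was used in the proof of \Cref{thm:aw-h}. The only change is that the roles of $\h$ and $\hh$ are interchanged: here the $\h$-version of the $\shuffle$-square is the known input, and the $\hh$-version is to be deduced.

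For the diagram~\eqref{eq:hh-shuffle} I would build a cube, analogous to~\eqref{eq:cube-hh-aw}, whose top face is~\eqref{eq:hh-shuffle} itself and whose bottom face is a relabelled copy of~\eqref{eq:shuffle-h} applied to the triple~$(\tZ,\tY,\tX)$, so that its horizontal arrow becomes $\shuffle_{\tZ,\tY\times\tX}$. The four vertical arrows of the cube are composites of the reversal chain maps $\ttau_{-,-}$ and $\tT_{-,-}$ with the canonical transposition $\tau_{\tX,\tY}$ that swaps the two rearmost factors on the reversed side, exactly as in~\eqref{eq:cube-hh-aw}. By \Cref{thm:shuffle-aw-h-hh} together with the naturality of $\ttau$ and $\tT$, these composites conjugate $\shuffle_{X\times Y,Z}$ into $\shuffle_{\tZ,\tY\times\tX}$ and each $\hh$-arrow on the top into the corresponding $\h$-arrow on the bottom. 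The four side faces therefore commute; the bottom face commutes by \Cref{thm:shuffle-h}; and the cube forces the top face, namely~\eqref{eq:hh-shuffle}, to commute as well.

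For~\eqref{eq:shuffle-hh} one performs the mirror-image chase, placing a relabelling of~\eqref{eq:h-shuffle} on the opposite face.

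The principal obstacle, as in the proof of \Cref{thm:aw-h}, is sign bookkeeping along the vertical edges of the cube: one must check that the prefactors $(-1)^{\nu(n)}$ coming from $\tiota$ and $\tT$ combine correctly wherever two faces share an edge, so that the composed vertical maps really agree on the nose. This is settled in exactly the same way as there, by repeated use of the congruence $\nu(m+n)\equiv\nu(m)+\nu(n)+mn\pmod 2$. Once the vertical arrows of the cube are pinned down, the commutativity of the target face is a purely formal consequence of the other five.
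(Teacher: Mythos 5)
Your proposal is correct and coincides with the paper's own argument: the paper's proof of \Cref{thm:shuffle-hh} is a one-line reference (``uses \Cref{thm:shuffle-h} and is otherwise analogous to the proof of \Cref{thm:aw-h}, done in reverse''), and you have unpacked that sentence faithfully — building the cube with the $\hh$-square as one horizontal face, the relabelled $\h$-square from \Cref{thm:shuffle-h} as the other, side faces supplied by \Cref{thm:shuffle-aw-h-hh} together with naturality, and the signs controlled by the congruence $\nu(m+n)\equiv\nu(m)+\nu(n)+mn\pmod 2$, exactly as in the \Cref{thm:aw-h} chase.
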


\begin{proof}
  This uses \Cref{thm:shuffle-h} and is otherwise
  analogous to the proof of \Cref{thm:aw-h}, done in reverse.
\end{proof}

\begin{corollary}
  \label{thm:f-hh-xyz-hh-hh-xyz}
  \Cref{thm:f-h-xyz} and \Cref{thm:h-h-xyz}
  remain valid for \(\hh\) instead of~\(\h\).
\end{corollary}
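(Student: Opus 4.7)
I would split the claim into two parts matching the $\h$ versions. For the $F$--$\hh$ commutation identities (the analogue of \Cref{thm:f-h-xyz}), I would simply combine the commutative diagrams \eqref{eq:aw-hh}\,+\,\eqref{eq:shuffle-hh} and \eqref{eq:hh-aw}\,+\,\eqref{eq:hh-shuffle}, respectively, in exactly the same way as the proof of \Cref{thm:f-h-xyz} combines the corresponding $\h$ diagrams. Since Propositions~\ref{thm:aw-hh} and~\ref{thm:shuffle-hh} supply precise $\hh$-analogues of the $\h$ versions, this part transfers verbatim.

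For the graded commutativity $\hh_{X\times Y,Z}\,\hh_{X,Y\times Z} = -\hh_{X,Y\times Z}\,\hh_{X\times Y,Z}$ I would not attempt to mimic the inductive proof of \Cref{thm:h-h-xyz}, which leans heavily on the recursive definition of $\h$ and the frontal identity~\eqref{eq:frontal}; no equally convenient recursion is available for~$\hh$. Instead, I would transfer the identity from \Cref{thm:h-h-xyz} by conjugation with the chain isomorphism $\sigma\colon C(X\times Y\times Z) \to C(\tZ\times\tY\times\tX)$ already introduced in the proof of \Cref{thm:aw-h}, namely the common value
\begin{equation*}
  \sigma = (\id_{\tZ},\tau_{\tX,\tY})_{*}\circ\ttau_{X\times Y,Z} = (\tau_{\tY,\tZ},\id_{\tX})_{*}\circ\ttau_{X,Y\times Z}.
\end{equation*}
The left face of the cube~\eqref{eq:cube-hh-aw} already records the intertwining $\sigma\,\h_{X\times Y,Z} = \hh_{\tZ\times\tY,\tX}\,\sigma$, and the mirror-image cube used to prove the diagram~\eqref{eq:aw-h} (starting from \eqref{eq:hh-aw}) supplies the companion identity $\sigma\,\h_{X,Y\times Z} = \hh_{\tZ,\tY\times\tX}\,\sigma$. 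Conjugating both sides of \Cref{thm:h-h-xyz} by $\sigma$ then produces
\begin{equation*}
  \hh_{\tZ\times\tY,\tX}\,\hh_{\tZ,\tY\times\tX} = -\hh_{\tZ,\tY\times\tX}\,\hh_{\tZ\times\tY,\tX},
\end{equation*}
and since $X\mapsto\tX$ is an involution on simplicial sets, relabeling $(\tZ,\tY,\tX)$ as $(X,Y,Z)$ yields the claimed identity for arbitrary~$X$,~$Y$,~$Z$.

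The main delicate point is the simultaneous intertwining: one has to know that the \emph{same} $\sigma$ conjugates both $\h_{X\times Y,Z}$ and $\h_{X,Y\times Z}$ to the expected $\hh$'s. This reduces to verifying that the two displayed expressions for $\sigma$ really coincide as chain maps, which is already implicit in the proof of \Cref{thm:aw-h} and is controlled by the sign identity $\nu(m+n)\equiv\nu(m)+\nu(n)+mn \pmod 2$ used there. Once this coherence is established, the remaining manipulation is purely formal.
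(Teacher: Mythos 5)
Your approach is substantively the same as the paper's. For the $F$--$\hh$ identities you combine the $\hh$-diagrams exactly as the paper does for the $\h$-diagrams in \Cref{thm:f-h-xyz}, and the paper simply says the proof ``carries over.'' For graded commutativity of $\hh_{X\times Y,Z}$ and $\hh_{X,Y\times Z}$ you conjugate by the composite reversal-plus-transposition chain isomorphism $C(X\times Y\times Z)\to C(\tZ\times\tY\times\tX)$, which is precisely what the paper does: its identity~\eqref{eq:ttau-tau} is the coincidence of the two factorizations you write for $\sigma$, and its $4\times 3$ diagram is the detailed verification that this one isomorphism simultaneously intertwines $\h_{X,Y\times Z}\leftrightarrow\hh_{\tZ\times\tY,\tX}$ and $\h_{X\times Y,Z}\leftrightarrow\hh_{\tZ,\tY\times\tX}$. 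You correctly single out this ``simultaneous intertwining'' as the delicate point.

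One small correction: you have the two intertwining relations labelled the wrong way around. Since $\ttau_{X,Y\times Z}$ conjugates $\h_{X,Y\times Z}$ to $\hh_{\widetilde{Y\times Z},\tX}$, the left face of the cube gives $\sigma\,\h_{X,Y\times Z}=\hh_{\tZ\times\tY,\tX}\,\sigma$, and the companion identity from the mirror cube is $\sigma\,\h_{X\times Y,Z}=\hh_{\tZ,\tY\times\tX}\,\sigma$ — not what you wrote with the subscripts swapped. (The paper's displayed diagrams near~\eqref{eq:cube-hh-aw} appear to contain the same swap of $\h_{X,Y\times Z}$ and $\h_{X\times Y,Z}$, so you may have inherited it.) Because the relation you are proving is symmetric in the two homotopies this has no effect on the conclusion, but it is worth fixing so the intermediate steps match \Cref{thm:shuffle-aw-h-hh}.
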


\begin{proof}
  The proof of \Cref{thm:f-h-xyz} carries over.
  For \Cref{thm:h-h-xyz} we note the equality of isomorphisms
  \begin{multline}
    \label{eq:ttau-tau}
    \qquad\quad
    \ttau_{X,Z\times Y}\,(\id,\tau_{Y,Z})_{*} = \ttau_{Y\times X,Z}\,(\tau_{X,Y},\id)_{*}\colon \\
    C(X\times Y\times Z)\to C(\tZ\times\tY\times\tX)
    \qquad\quad
  \end{multline}
  and write \(\sigma=(\tau_{X,Y},1)\,(\id,\tau_{Y,Z})^{-1}\colon X\times Z\times Y\to Y\times X\times Z\). The diagram
  \begin{equation}
    \kern-4ex
    \begin{tikzcd}[column sep=4.25em]
      C(X\times Y\times Z) \arrow{d}{\h_{X,Y\times Z}} \arrow{r}{(\id,\tau_{Y,Z})_{*}} & C(X\times Z\times Y) \arrow{d}{\h_{X,Z\times Y}} \arrow{r}{\ttau_{X,Z\times Y}} & C(\tZ\times\tY\times\tX) \arrow{d}{\h_{\tZ\times\tY,\tX}} \\
      C(X\times Y\times Z) \arrow{d}{=} \arrow{r}{(\id,\tau_{Y,Z})_{*}} & C(X\times Z\times Y) \arrow{d}{\sigma_{*}} \arrow{r}{\ttau_{X,Z\times Y}} & C(\tZ\times\tY\times\tX) \arrow{d}{=} \\
      C(X\times Y\times Z) \arrow{d}{\h_{X\times Y,Z}} \arrow{r}{(\tau_{X,Y},\id)_{*}} & C(Y\times X\times Z) \arrow{d}{\h_{Y\times X,Z}} \arrow{r}{\ttau_{Y\times X,Z}} & C(\tZ\times\tY\times\tX) \arrow{d}{\h_{\tZ,\tY\times\tX}} \\
      C(X\times Y\times Z) \arrow{r}{(\tau_{X,Y},\id)_{*}} & C(Y\times X\times Z) \arrow{r}{\ttau_{Y\times X,Z}} & C(\tZ\times\tY\times\tX)
    \end{tikzcd}
  \end{equation}
  commutes by naturality and \Cref{thm:shuffle-aw-h-hh} as well as the identity~\eqref{eq:ttau-tau} for the centre-right square.
  The same applies to the analogous diagram for the composition~\(\h_{X,Y\times Z}\,\h_{X\times Y,Z}\).
  The isomorphism~\eqref{eq:ttau-tau} therefore translates between \Cref{thm:h-h-xyz} and its analogue for~\(\hh\).
\end{proof}

\section{Relations among the Eilenberg--Zilber maps}
\label{sec:relations}

Let us summarize what is known about relations between the Eilenberg--Zilber maps and their interactions with the differential and
the transposition maps~\eqref{eq:def-T} and~\eqref{eq:def-tau}.
We focus on Eilenberg--Mac\,Lane's original homotopy~\(\h\).
For a one-point space~\(*\), we make the canonical identifications~\(C(*)=\kk\) and \(*\times X=X\times*=X\). 

\begin{enumroman}
\item
  The Alexander--Whitney map~\(\AW\), the shuffle map~\(\shuffle\) and the homotopy~\(\h\) are natural with respect to pairs of maps between simplicial sets.
\item If one argument is a one-point space, then \(\AW\) and~\(\shuffle\) reduce to the identity map on the chain complex of the other space,
  and \(\h\) reduces to~\(0\).
\item Both~\(\AW\) and~\(\shuffle\) are chain maps, and the contraction identities~\eqref{eq:ez-contraction} hold.
\item The Alexander--Whitney map is coassociative.
  The shuffle map is associative \cite[Thms.~5.2~\&~5.4]{EilenbergMacLane:1953}
  and also commutative in the sense that the following diagram commutes:
  \begin{equation}
    \begin{tikzcd}
      C(X)\otimes C(Y) \arrow{d}[left]{T_{C(X),C(Y)}} \arrow{r}{\shuffle_{X,Y}} & C(X\times Y) \arrow{d}{(\tau_{X,Y})_{*}} \\
      C(Y)\otimes C(X) \arrow{r}{\shuffle_{Y,X}} & C(Y\times X)
    \end{tikzcd}
  \end{equation}
\item
  \label{ez-prop-5}
  The diagram
  \begin{equation}
    \begin{tikzcd}[column sep=huge]
    C(X\times Y)\otimes C(Z\times W) \arrow{d}[left]{\AW_{X,Y}\otimes\AW_{Z,W}} \arrow{r}{\shuffle_{X\times Y,Z\times W}} & C(X\times Y\times Z\times W)\arrow{d}{(\id,\tau_{Y,Z},\id)_*} \\
    C(X)\otimes C(Y)\otimes C(Z)\otimes C(W) \arrow{d}[left]{1\otimes T_{C(Y),C(Z)}\otimes1} & C(X\times Z\times Y\times W) \arrow{d}{\AW_{X\times Z,Y\times W}} \\
    C(X)\otimes C(Z)\otimes C(Y)\otimes C(W) \arrow{r}{\shuffle_{X,Z}\otimes \shuffle_{Y,W}} & C(X\times Z)\otimes C(Y\times W)      
    \end{tikzcd}
  \end{equation}
  commutes \cite[Prop.~2.2.1]{Franz:2001}.
  Like the properties mentioned in the previous item, this holds already before normalization.
\item
  \label{ez-prop-6}
  One has the identities given in Propositions~\ref{thm:aw-h},~\ref{thm:shuffle-h} and~\ref{thm:h-h-xyz}.
\end{enumroman}

By setting one simplicial set equal to~\(*\),
one can deduce from~\ref{ez-prop-5} the commutative diagram~\cite[Lemme~II.4.2]{Shih:1962}
\begin{equation}
  \label{eq:shuffle-aw-1}
  \begin{tikzcd}[column sep=large]
    C(X)\otimes C(Y\times Z) \arrow{d}[left]{1\otimes\AW_{Y,Z}} \arrow{r}{\shuffle_{X,Y\times Z}} & C(X\times Y\times Z) \arrow{d}{\AW_{X\times Y,Z}} \\
    C(X)\otimes C(Y)\times C(Z) \arrow{r}{\shuffle_{X,Y}\otimes 1} & C(X\times Y)\otimes C(Z)
  \end{tikzcd}
\end{equation}
as well as its mirror image
\begin{equation}
  \label{eq:shuffle-aw-2}
  \begin{tikzcd}[column sep=large]
    C(X\times Y)\otimes C(Z) \arrow{d}[left]{\AW_{X,Y}\otimes1} \arrow{r}{\shuffle_{X\times Y,Z}} & C(X\times Y\times Z) \arrow{d}{\AW_{X,Y\times Z}} \\
    C(X)\otimes C(Y)\times C(Z) \arrow{r}{1\otimes\shuffle_{Y,Z}} & C(X)\otimes C(Y\times Z) \mathrlap{.}
  \end{tikzcd}
\end{equation}
Property~\ref{ez-prop-5} together with naturality also implies that the shuffle map is a morphism of coalgebras \cite[(17.6)]{EilenbergMoore:1966};
one can equivalently derive \ref{ez-prop-5} from this latter fact and naturality.
We have seen, moreover, that \Cref{thm:f-h-xyz} follows from property~\ref{ez-prop-6}.

\begin{question}
  Are there any relations involving the Eilenberg--Zilber maps
  that are not formal consequences of the properties listed above?
\end{question}

\section{Comparing the twisting cochains}
\label{sec:tw-compare}

We finally turn to twisted Cartesian products and compare the twisting cochains defined by Szczarba and Shih.

\subsection{Twisted Cartesian products}

Let \(B\) be a simplicial set, \(G\) a simplicial group and
\(\tau\colon B_{>0}\to G\) a twisting function. We use the convention
\begin{equation}
  \partial_{0}\,\tau(b) = \tau(\partial_{0}b)^{-1}\,\tau(\partial_{1}b)
\end{equation}
from~\cite[eq.~(1.1)]{Szczarba:1961},~\cite[Def.~18.3]{May:1968},
which corresponds to a left action of~\(G\) on bundle fibres.
To turn this into a right action one instead takes \(\sigma(b)=\tau(b)^{-1}\),
which is a twisting function in the sense of~\cite[p.~25]{Shih:1962}.

Let \(F\) be a simplicial set with a \(G\)-action. The twisted Cartesian product~\(F\times_{\tau}B\)
has the modified first face map
\begin{equation}
  \partial^{\tau}_{0}(f,b) = \bigl(\tau(b)\cdot\partial_{0}f,\partial_{0}b\bigr) = \bigl(\partial_{0}f\cdot\sigma(b),\partial_{0}b\bigr)
\end{equation}
for simplices of positive dimension, depending on whether one lets \(G\) act on the left or on the right.

\subsection{Shih's twisting cochain}

We recall the definition of Shih's twisting cochain, compare~\cite[\S II.1]{Shih:1962},~\cite{Brown:1965},~\cite[Sec.~4]{Gugenheim:1972}.
We write \(d_{\otimes}\) for the tensor product differential on~\(C(F)\otimes C(B)\) and
\begin{equation}
  \delta=d^{\tau}-d=
  \begin{cases}
    \partial^{\tau}_{0}-\partial_{0} & \text{in positive degree,} \\
    0 & \text{in degree~\(0\)}
  \end{cases}
\end{equation}
for the difference between the twisted and the untwisted differential on the graded module~\(C(F\times B)=C(F\times_{\tau}B)\).

\begin{lemma}
  \label{thm:delta-h-iter}
  Let \(f\) be a simplicial operator from the product~\(F\times B\) to itself.
  Then for any~\(c\in C(F\times B)\) there is an~\(n\ge0\) such that \((\delta\,f)^{n}(c)=0\).
\end{lemma}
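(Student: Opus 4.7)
The plan is to introduce a filtration on $C(F\times B)$ measuring the nondegenerate dimension of the base component, and to show that $\delta$ strictly decreases it while any simplicial operator $f$ does not increase it. For a nondegenerate simplex $(x,b)\in F_n\times B_n$, write $b=s_{j_1}\cdots s_{j_k}\bar b$ in canonical form with $j_1>\cdots>j_k\ge0$ and $\bar b\in B_{n-k}$ nondegenerate, and set $p(x,b)=\dim\bar b$. Extend $p$ to chains by taking the maximum over simplices in the support, with the convention $p(0)=-\infty$; then any chain with $p(c)<0$ vanishes, since $p(x,b)\ge 0$ always.

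First I would verify that $p(fc)\le p(c)$ for any simplicial operator $f$. Writing $f$ as a finite linear combination of FD-monomials that act (possibly separately) on the two factors of $F\times B$, each such monomial sends $b$ to some $s_{\vec\ell}\,\partial_{\vec i}\,b$; by the simplicial identities, degeneracies preserve the nondegenerate dimension and faces only weakly decrease it, so the base nondegenerate dimension of the output cannot exceed $\dim\bar b=p(x,b)$.

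Next I would show that $p(\delta c)\le p(c)-1$. Given $(x,b)$ with $b$ in canonical form as above, there are two cases. If $j_k=0$, then iterating the identity $\tau(s_i b'')=s_{i-1}\tau(b'')$ for $i>0$ down to the axiom $\tau(s_0 b')=1_G$ yields $\tau(b)=1_G$, so $\partial_{0}^{\tau}(x,b)=\partial_{0}(x,b)$ and $\delta(x,b)=0$. Otherwise every $j_i\ge1$, and iterating $\partial_0 s_j=s_{j-1}\partial_0$ gives $\partial_0 b=s_{j_1-1}\cdots s_{j_k-1}\,\partial_0\bar b$, whose nondegenerate dimension is at most $\dim\bar b-1=p(x,b)-1$.

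Combining these two bounds yields $p(\delta f c)\le p(c)-1$, and induction gives $p((\delta f)^n c)\le p(c)-n$; taking any $n>p(c)$ forces the filtration to become negative, so $(\delta f)^n c=0$. The main subtlety is the second step: one must invoke precisely the twisting-function axiom $\tau(s_0 b)=1_G$ to annihilate simplices whose base has an innermost $s_0$-degeneracy, since otherwise $\partial_0$ cannot be commuted past the degeneracies to reach $\bar b$ and $\delta$ would fail to strictly lower $p$.
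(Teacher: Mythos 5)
Your argument is correct and amounts to spelling out in detail exactly the filtration argument the paper gestures at (the filtration of $F\times B$ by the skeletons of $B$, with $f$ preserving and $\delta$ strictly lowering the filtration degree) and delegates to Brown's 1965 paper. In particular you correctly isolate the crucial role of the twisting-function normalization $\tau(s_0 b)=1_G$ in showing that $\delta$ strictly decreases the base nondegenerate dimension, which the paper leaves implicit.
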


\begin{proof}
  This follows from the naturality of~\(f\) with respect to the filtration of~\(F\times B\) by the skeletons of~\(B\),
  see~\cite[p.~34]{Brown:1965}.
\end{proof}

As a consequence,
the twisted differential~\(d^{\tau}\) can be transferred along the contraction~\((\AW,\shuffle,\hh)\) 
to the differential
\begin{equation}
  \label{eq:d-twisted-shih}
  d_{t} = d_{\otimes} + \sum_{n=0}^{\infty} \AW\,(\delta\,\hh)^{n}\,\delta\,\shuffle
\end{equation}
on~\(C(F)\otimes C(B)\). (\Cref{thm:delta-h-iter} guarantees that for any argument the sum is finite.)
Because \(\delta\) involves only the first face map and
the diagrams~\eqref{eq:hh-aw} and~\eqref{eq:shuffle-hh} commute, 
we obtain a twisted tensor product \(C(F)\otimes_{t} C(B)\)
related to~\(C(F\times_{\tau}B)\) by a new contraction, see \Cref{sec:twisted-shuffle} below.
The twisting cochain
\begin{equation}
  t\colon C(B)\to C(G)
\end{equation}
satisfies\footnote{%
  The sign in the twisting cochain condition is given incorrectly in~\cite[p.~29]{Shih:1962} and~\cite[Déf.~3.2.6]{Rubio:1991},
  but the same sign is correct in~\cite[p.~198]{Szczarba:1961} and~\cite[Def.~2.3]{Gugenheim:1972}. This difference is explained
  by the different orders of the factors in the twisted tensor product,
  compare~\cite[Def.~II.1.4]{HusemollerMooreStasheff:1974}.}%
\begin{equation}
  d(t) = t\cup t,
\end{equation}
and we have
\begin{equation}
  d_{t} = d_{\otimes} + (\mu\otimes1)(1\otimes t\otimes 1)(1\otimes\Delta)
\end{equation}
where \(\Delta\) is the diagonal of~\(C(B)\)
and the map~\(\mu\colon C(F)\otimes C(G)\to C(F)\) is induced by the \(G\)-action,
see~\cite[\S II.1, Thm.~2]{Shih:1962},~\cite[p.~410]{Gugenheim:1972}.
Conversely, \(t\) can be recovered from the differential~\(d_{t}\)
on the twisted tensor product~\(C(G)\otimes_{t}C(B)\) as the composition
\begin{equation}
  \label{eq:twc-from-twf}
  C(B) \xrightarrow{\eta\otimes1} C(G)\otimes C(B) \xrightarrow{d_{t}-d_{\otimes}} C(G)\otimes C(B) \xrightarrow{1\otimes\epsilon} C(G)
\end{equation}
where \(\epsilon\colon C(B)\to\kk\) is the augmentation and \(\eta\colon\kk\to C(G)\) the unit map \cite[p.~28]{Shih:1962}.

\subsection{Szczarba's twisting cochain}

Szczarba~\cite[Sec.~2]{Szczarba:1961} has defined an explicit twisting cochain\footnote{%
  Szczarba calls \(\phi(x)=-(-1)^{\deg{x}}\,t(x)\) a twisting cochain and that he does not use the Koszul sign convention,
  so that \((f\otimes g)(x\otimes y)=f(x)\,g(y)\) without the sign~\((-1)^{\deg{g}\deg{x}}\).}
\begin{equation}
  \tsz\colon C(B)\to C(G)
\end{equation}
and an explicit quasi-isomorphism~\(\psi\) between~\(C(B)\otimesL{\tsz} C(F)\) and~\(C(B\times_{\tau}F)\).
Note that Szczarba's ordering of the factors differs from Shih's; we will come back to this in \Cref{sec:twisted-shuffle}
where we also recall the definition of~\(\psi\).
In the proof below we are going to use a characterization of~\(\tsz\) due to Morace--Prouté~\cite[Sec.~6]{MoraceProute:1994}.

\begin{theorem}
  \label{thm:szarba-shih}
  Let \(F\times_{\tau}B\) be a twisted Cartesian product with structure group~\(G\).
  Then Szczarba's twisting cochain~\(\tsz\)
  is equal to Shih's twisting cochain based on the opposite Ei\-len\-berg--Zilber contraction~\((\AW,\shuffle,\hh)\).
\end{theorem}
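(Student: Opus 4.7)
My plan is to prove this by showing that Shih's twisting cochain $t$ built from the opposite Eilenberg--Zilber contraction $(\AW,\shuffle,\hh)$ satisfies the characterization of Szczarba's cochain $\tsz$ given by Morace--Prouté. That characterization identifies $\tsz$ as the unique twisting cochain whose value on a simplex $b\in B_n$ is described by an explicit sum indexed by certain shuffle-and-face data, each summand being a product of values of $\tau$ on iterated faces of $b$. So the task reduces to unraveling \eqref{eq:twc-from-twf} and \eqref{eq:d-twisted-shih} with $\hh$ in place of $\h$, and matching the resulting explicit sum against that characterization.

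First I would specialize $F=G$ with its regular action and extract $t$ via \eqref{eq:twc-from-twf}. Using that $\shuffle(\eta(1)\otimes b)=(s_0^n e,b)$ for $e\in G_0$ the unit and $n=\deg{b}$, and that $\delta=\partial_0^\tau-\partial_0$ supported on the first face, the innermost $\delta\,\shuffle$ produces $(\tau(b)\cdot s_0^{n-1}e,\partial_0 b)$. I would then iterate: each factor $\delta\,\hh$ applied to the current chain substitutes the explicit formula \eqref{eq:def-hh} (a sum over $(p,q+1)$-shuffles with a trailing degeneracy $s_{p+q+1}$ on the second factor) and extracts a new $\tau$-twist at the first face of the result. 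Finally the outer $\AW$ contracts the shuffled group simplex, and the multiplication in $G$ collapses the collected degenerate unit elements, yielding a product of $\tau$-values on specific iterated faces of $b$.

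Reading off the combinatorial data, each summand of the resulting formula for $t(b)$ corresponds to a sequence of shuffles $(\alpha_i,\beta_i)$ and face-index data prescribing where $\tau$ is evaluated, with a sign determined by the sign exponents of \eqref{eq:def-hh} together with the signatures $(-1)^{(\alpha_i,\beta_i)}$. I would then compare this indexing set and sign rule termwise with Szczarba's original formula (or equivalently with the Morace--Prouté description): the ``reversed'' conventions built into $\hh$ are precisely what aligns the front-to-back reading of Szczarba's combinatorics with the iterated-face output of the perturbation sum. Once this termwise match is verified, the theorem follows.

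The main obstacle will be this combinatorial bookkeeping: checking that, after collapsing unit degeneracies in $G$ and applying $\AW$, the shuffle indices produced by $n$ successive applications of $\hh$ parametrize exactly the same data as Szczarba's sum, and that the signs coincide. Lemma \ref{thm:parity} together with the simpler sign exponent of \eqref{eq:def-hh} (versus that of $\h$) should make the signs fall into place, but the careful reindexing — tracking which faces of $b$ each $\tau$-factor is applied to after the cascade of degeneracies, face maps, and twisted first faces — is the technical core of the argument and is where appeal to Morace--Prouté's uniqueness criterion is most valuable, since it lets one verify the match without having to re-derive Szczarba's formula from scratch.
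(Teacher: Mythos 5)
Your proposal correctly identifies Morace--Prouté's characterization of $\tsz$ as the right tool, but it misstates what that characterization is, and this leads you toward a much harder argument than the one actually needed. Morace--Prouté do \emph{not} characterize $\tsz$ by an ``explicit sum indexed by shuffle-and-face data''; that is Szczarba's original formula, and a characterization by formula would be vacuous. What they prove is an \emph{abstract uniqueness statement}: $\tsz$ is the unique \emph{natural} twisting cochain $t\colon C(B)\to C(\GG B)$ satisfying the degree-one normalization \eqref{eq:norm-sz-tw} together with the vanishing condition $\bar h\,t(\bar e_n)=0$ of \eqref{eq:char-sz-tw}, the latter being implied whenever every simplex occurring in $t(\bar e_n)$ is right-justified (has final vertex $n$). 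The entire point of this criterion is that one can verify it \emph{without} unwinding the perturbation series into an explicit combinatorial formula, which is precisely the bookkeeping your proposal identifies as the ``technical core''.

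With the characterization stated correctly, the argument becomes structural rather than combinatorial. One first reduces, via naturality of both twisting cochains with respect to diagrams of the form \eqref{eq:tw-natural}, to the case of a reduced base $B$ with $G=\GG B$ (and the general case then follows by mapping $B$ onto its reduction $\tilde B$). Condition \eqref{eq:norm-sz-tw} for Shih's cochain is a one-line computation from \eqref{eq:twc-from-twf}. Condition \eqref{eq:char-sz-tw} requires no sign-chasing or shuffle reindexing at all: one simply observes that $\delta$ only modifies the \emph{first} face, and that the explicit formula \eqref{eq:def-hh} for $\hh$ likewise never applies the \emph{last} face operator $\partial_n$ to either factor. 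Hence every simplex produced by iterating $\delta\,\hh$ in \eqref{eq:d-twisted-shih}, and therefore every simplex in $t(\bar e_n)$, shares the final vertex $n$ with $\bar e_n$, i.e.\ is right-justified. This is exactly where the choice of $\hh$ over $\h$ is used --- had one used $\h$, whose formula \eqref{eq:def-h} does involve the last faces $\partial_{n-q+1}^{n}$, this observation would fail. Your proposal never isolates this decisive feature, and the termwise comparison with Szczarba's formula that you propose instead, while presumably possible in principle, is precisely the computation that Morace--Prouté's result lets one avoid.
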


\begin{proof}
  Assume first that \(B\) is reduced, that is, with a single vertex.
  In this case twisting functions~\(B_{>0}\to G\) correspond bijectively
  to maps of simplicial groups~\(q_{\tau}\colon\GG B\to G\), compare \cite[Cor.~27.2]{May:1968}.
  Both Szczarba's and Shih's twisting cochain are natural with respect to commutative diagrams
  \begin{equation}
    \label{eq:tw-natural}
    \begin{tikzcd}
      B_{>0} \arrow{d}{p} \arrow{r}{\tau} & G \arrow{d}{q} \\
      \tilde B_{>0} \arrow{r}{\tilde\tau} & \tilde G
    \end{tikzcd}
  \end{equation}
  in the sense that
  \begin{equation}
    \tilde t\,p_{*} = q_{*}\,t\colon C(B) \to C(\tilde G)
  \end{equation}
  holds for each of them. We may therefore assume \(G=\GG B\) in this case.
  
  Morace--Prouté have characterized Szczarba's cochain
  as the unique natural twisting cochain~\(t\colon C(B)\to C(\GG B)\) satisfying
  \begin{equation}
    \label{eq:norm-sz-tw}
    t(b) = \sigma(b)-1 \in (\GG B)_{0}
  \end{equation}
  for all~\(B\) (or just \(B=\bar\Delta[n]\)) and all~\(b\in B_{1}\) as well as
  \begin{equation}
    \label{eq:char-sz-tw}
    \bar h\,t(\bar e_{n}) = 0
  \end{equation}
  for all~\(n\ge2\), where \(\bar\Delta[n]\) is the \(n\)-simplex with all vertices identified,
  \(\bar e_{n}\in\bar\Delta[n]_{n}\) its fundamental simplex and \(\bar h\colon C(\bar\Delta[n])\to C(\bar\Delta[n])\)
  the contracting homotopy defined in~\cite[Sec.~3]{MoraceProute:1994}.
  As remarked in~\cite[p.~89]{MoraceProute:1994}, the condition~\eqref{eq:char-sz-tw} is satisfied if each simplex appearing in~\(t(\bar e_{n})\)
  is `right justified', meaning that it has the same final vertex~\(n\) as~\(\bar e_{n}\).
  
  Condition~\eqref{eq:norm-sz-tw} for Shih's twisting cochain follows from~\eqref{eq:twc-from-twf} and the identity
  \begin{align}
    (d_{t}-d_{\otimes})(1\otimes b) &= \AW\,\delta(1,b)
    = \AW\bigl( (\sigma(b),b_{0}) -(1,b_{0}) \bigr) \\
    \notag
    &= \bigl(\sigma(b)-1\bigr)\otimes b_{0}
  \end{align}
  for~\(b\in B_{1}\).
  If the last face operator is never used in the recursive definition of~\(t\),
  then all simplices appearing in the resulting chains are right-justified, so that \eqref{eq:char-sz-tw} holds.
  An inspection of the formulas for~\(\delta\) and~\(\hh\) shows that this is indeed the case,
  which proves our claim in case of a reduced base.

  For general~\(B\) we consider the reduced simplicial set~\(\tilde B\) obtained by identifying all vertices in~\(B\).
  The definition of a twisting function implies that \(\tau\) sends all degenerations of vertices to identity elements in~\(G\).
  It therefore induces a twisting function~\(\tilde\tau\colon\tilde B_{>0}\to G\) such that \eqref{eq:tw-natural} commutes for~\(q=\id_{G}\).
  Because Szczarba's and Shih's cochains agree for the twisted Cartesian product~\(F\times_{\tilde\tau}\tilde B\),
  they do so for~\(F\times_{\tau}B\). This completes the proof.
\end{proof}

\section{The twisted shuffle maps}
\label{sec:twisted-shuffle}

Shih's twisted differential~\eqref{eq:d-twisted-shih}
is part of a twisted Eilenberg--Zilber contraction, see~\cite[Thm.~II.1.1]{Shih:1962}.
The twisted shuffle map
\begin{equation}
  \label{eq:twisted-shuffle}
  \shuffle^{\tau}\colon C(F)\otimes_{\tsz}C(B) \to C(F\times_{\tau}B)
\end{equation}
is compatible with the right \(C(B)\)-comodule structure on both sides
and for~\(F=G\) also equivariant with respect to~\(C(G)\), see~\cite[Props.~II.4.2~\&~II.4.3]{Shih:1962}.
(The same holds in fact for the twisted Alexander--Whitney map and the twisted homotopy, compare~\cite[Lemma~4.5\(^{*}\)]{Gugenheim:1972}.)

On the other hand, Szczarba~\cite[Thm.~2.4]{Szczarba:1961} gives an explicit quasi-isomorphism
\begin{equation}
  \label{eq:quasi-sz}
  \psi\colon C(B)\otimesL{\tsz} C(F) \to C(B\times_{\tau}F).
\end{equation}
with the factors~\(B\) and~\(F\) swapped compared to Shih, hence with an adjusted definition
of the twisted differential on the left-hand side, \cf~\cite[Def.~II.1.4]{HusemollerMooreStasheff:1974}.
We observe that Szczarba's map enjoys the same nice properties as Shih's.

\begin{proposition}
  The map~\(\psi\) is a morphism of left \(C(B)\)-modules and, in the case~\(F=G\), also
  a morphism of right \(C(G)\)-modules.
\end{proposition}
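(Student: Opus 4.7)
The plan is to realize $\psi$ as the twisted shuffle coming from applying the basic perturbation lemma to the opposite Eilenberg--Zilber contraction $(\AW_{B,F},\shuffle_{B,F},\hh_{B,F})$ for the product $B\times F$, and then to mirror Shih's proofs of Propositions~II.4.2 and~II.4.3 of~\cite{Shih:1962} in this setting.

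First, applying BPL with the perturbation $\delta=d^{\tau}-d$ on $C(B\times F)$ (which now affects the $F$-factor through the twisted face $\partial_0^{\tau}$) produces a twisted shuffle
\[
  \shuffle^{\tau}=\sum_{n\ge0}(\hh_{B,F}\,\delta)^{n}\,\shuffle_{B,F}\colon C(B)\otimesL{t'}C(F)\to C(B\times_{\tau}F)
\]
together with a perturbed differential on the left-hand side governed by an induced twisting cochain $t'\colon C(B)\to C(G)$. By \Cref{thm:szarba-shih} (whose proof transfers to the swapped ordering, since the twisting cochain is intrinsically a map $C(B)\to C(G)$ independent of the order of the factors in the twisted Cartesian product), one has $t'=\tsz$.

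Next, I would identify $\psi$ with $\shuffle^{\tau}$. Rather than expanding Szczarba's explicit combinatorial formula and comparing it directly with the BPL expansion, my preferred route is a uniqueness argument in the spirit of Morace--Prout\'e~\cite{MoraceProute:1994}: both maps are natural chain maps extending $\shuffle_{B,F}$ by corrections in positive powers of $\delta$, and an induction on the skeletal filtration of $B$ (reducing to the reduced case first, as in the proof of \Cref{thm:szarba-shih}) shows they coincide.

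With $\psi=\shuffle^{\tau}$ established, both compatibilities follow from properties of the untwisted contraction that are stable under BPL. The left $C(B)$-comodule compatibility holds because $\shuffle_{B,F}$ is a morphism of left $C(B)$-comodules (as $\shuffle$ is a coalgebra map), the diagrams of \Cref{thm:shuffle-hh} propagate this property to $\hh_{B,F}$, and $\delta$ acts only on the $F$-factor and hence commutes with the coaction $\Delta\otimes 1$. For the $C(G)$-equivariance when $F=G$, the key observation is that $\delta$ commutes with the right $G$-action on $G$, while $\shuffle_{B,G}$ and $\hh_{B,G}$ are already $C(G)$-equivariant by naturality; hence so is each term in the BPL sum. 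The main obstacle is the identification $\psi=\shuffle^{\tau}$, because Szczarba's and Shih's formulas differ substantially in appearance, and bridging them requires the same sort of careful uniqueness argument used in the proof of \Cref{thm:szarba-shih}.
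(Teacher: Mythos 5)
Your plan hinges entirely on identifying Szczarba's map~$\psi$ with the twisted shuffle $\shuffle^{\tau}$ produced by the basic perturbation lemma from some version of $(\AW,\shuffle,\hh)$ for $B\times F$, and then inheriting the comodule and module compatibilities from BPL-stability. That identification is precisely what the paper declines to make, and for good reason: the last paragraph of \Cref{sec:twisted-shuffle} explains that to get a twisted tensor product of the form $C(B)\otimesL{\tilde t}C(F)$ via Shih's machinery one must either put the twisting in the \emph{last} face map (Gugenheim's route, which produces a different twisting cochain $\tilde t\ne\tsz$) or replace $\AW$ by its transpose (which does recover $\tsz$ but changes the $C(B)$-comodule structure, so one lands on a different twisted tensor product from Szczarba's). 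In either case the resulting BPL shuffle map is not visibly $\psi$, and the paper explicitly concludes that \Cref{thm:szarba-shih} ``does not shed light onto the precise relationship between the two twisted shuffle maps.'' Your appeal to a ``uniqueness argument in the spirit of Morace--Prouté'' doesn't rescue this: Morace--Prouté characterize the twisting \emph{cochain}, not the twisted shuffle \emph{map}, and there is no analogous uniqueness statement for $\psi$ in the paper or the cited literature. You flag this as ``the main obstacle,'' but the proposal leaves it unresolved, so the argument has a genuine gap at its foundation.

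The paper's actual proof is much more direct and bypasses BPL entirely. The $C(G)$-equivariance follows immediately from the definition $\psi=(\id,\mu)_*\,\shuffle_{B\times G,F}\,(\phi\otimes 1)$ together with naturality and associativity of the untwisted shuffle map — no perturbation analysis needed. For the left $C(B)$-comodule compatibility, the paper reduces (via naturality and the diagram~\eqref{eq:shuffle-aw-2}) to showing that $\phi$ itself is a comodule morphism, i.e.\ to the explicit identity
\begin{equation*}
  \partial_{k+1}^{n} b \otimes \phi(\partial_{0}^{k-1}b)
  = \sum_{\ii\in S_{n}} (-1)^{\deg{\ii}}\,\partial_{k+1}^{n}\,D_{0,\ii}^{n+1}\,b \otimes \partial_{0}^{k-1} \phi(b),
\end{equation*}
which is then verified by a concrete computation with Szczarba's operators $D_{j,\ii}^{n+1}$: the frontal property~\eqref{eq:frontal} and the recursive definition of $D_{0,\ii}^{n+1}$ force $\partial_{k+1}^n D_{0,\ii}^{n+1}b$ to be degenerate unless $i_1=\dots=i_k=0$, and in that case $\partial_0$ commutes with $\phi_{\ii}$ by Szczarba's own identity. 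If you want to push through your route, you would first have to prove $\psi=\shuffle^\tau$ for a suitable contraction, which is an open issue of the paper rather than a routine step.
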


Before entering the proof, let us recall that \(\psi\) is defined as the composition
\begin{multline}
  \qquad
  C(B)\otimesL{\tsz} C(F) \xrightarrow{\phi\otimes 1} C(B\times G)\otimes C(F) \\
  \xrightarrow{\shuffle_{B\times G,F}} C(B\times G\times F)
  \xrightarrow{(\id,\mu)_{*}} C(B\times_{\tau} F)
  \qquad
\end{multline}
where \(\mu\colon G\times F\to F\) is the action map, and
\begin{equation}
  \phi(b) = \sum_{\ii\in S_{n}} (-1)^{\deg{\ii}}\bigl(D_{0,\ii}^{n+1}b, D_{1,\ii}^{n+1}\sigma(b)\cdots D_{n,\ii}^{n+1}\sigma(b) \bigr)
  =\vcentcolon \sum_{\ii\in S_{n}} (-1)^{\deg{\ii}}\,\phi_{\ii}(b)
\end{equation}
for~\(b\in B_{n}\).\footnote{%
  In the definition of~\(\psi\) in~\cite[p.~201]{Szczarba:1961}
  the upper summation index should read ``\(p!\)''.}
Following Hess-Tonk's exposition~\cite[Sec.~1.4]{HessTonks:2006}, we write
\begin{gather}
  S_{n} = \bigl\{\, \ii=(i_{1},\dots,i_{n})\in\N^{n} \bigm| \text{\(0\le i_{s}\le n-s\) for all~\(s\)}\,\bigr\}
\shortintertext{and also}
  \deg{\ii} = \sum\ii =  i_{1}+\dots+i_{n}.
\end{gather}
The indices~\(i\) and~\(k\) Szczarba uses in the recursive definition of the simplicial operators~\(D_{j,i}^{n+1}=D_{j,\ii}^{n+1}\)
\cite[eq.~(3.1)]{Szczarba:1961} 
are related to~\(\ii\) via
\begin{equation}
  i = 1 + \sum_{s=1}^{n}\,(n-s)\,i_{s} \in \{\,1,\dots, n!\,\}
\end{equation}
and~\(k=i_{1}\). We moreover have \(\deg{\ii}+n=\epsilon(i,n+1)\) in Szczarba's notation, see the remark before~\cite[Thm.~7]{HessTonks:2006}.

\begin{proof}
  The equivariance with respect to the \(C(G)\)-action follows by naturality and
  associativity of the shuffle map. We now consider the \(C(B)\)-comodule structure.
  
  Because of naturality and the commutative diagram~\eqref{eq:shuffle-aw-2}
  it is enough to show that \(\phi\) is a morphism of left comodules,
  that is, to establish the identity
  \begin{equation}
    \partial_{k+1}^{n} b \otimes \phi(\partial_{0}^{k-1}b)
    = \sum_{\ii\in S_{n}} (-1)^{\deg{\ii}}\,\partial_{k+1}^{n}\,D_{0,\ii}^{n+1}\,b \otimes \partial_{0}^{k-1} \phi(b)
  \end{equation}
  for any~\(0\le k\le n\) and any~\(b\in B_{n}\).
  Recall that all operators~\(D_{j,\ii}^{n+1}\) are frontal \cite[p.~1866]{HessTonks:2006},
  hence satisfy the identity~\eqref{eq:frontal}.
  It follows by induction from the definition
  \begin{equation}
    D_{0,\ii}^{n+1} = \begin{cases}
      \bigl(D_{0,(i_{2},\dots,i_{n})}^{n+1}\bigr)' & \text{if \(i_{1}=0\),} \\
      \bigl(D_{0,(i_{2},\dots,i_{n})}^{n+1}\bigr)'s_{0}\,d_{i_{1}} = s_{0}\,D_{0,(i_{2},\dots,i_{n})}^{n+1}\,d_{i_{1}} & \text{if \(i_{1}>0\)}
    \end{cases}
  \end{equation}
  that \(\partial_{k+1}^{n}D_{0,\ii}^{n+1}\,b\) is degenerate unless \(i_{1}=\dots=i_{k}=0\).
  For~\(i_{1}=0\) the identity~%
  \( 
    \partial_{0}\,\phi_{\ii}(b) = \phi_{\ii}(\partial_{0}\,b)
  \) 
  holds, see~\cite[bottom of p.~205]{Szczarba:1961}. By induction this gives
  \begin{equation}
    \partial_{0}^{k-1} \phi_{\ii}(b) = \phi_{\ii}(\partial_{0}^{k-1}\,b)
  \end{equation}
  for~\(i_{1}=\dots=i_{k}=0\) and completes the proof.
\end{proof}

To arrive at a twisted tensor product of the form~\(C(B)\otimesL{\tilde t\,}C(F)\) with Shih's approach,
Gugenheim starts with a twisted Cartesian product~\(B\times_{\tilde\tau}F\)
where the twisting occurs in the \emph{last} face map, see~\cite[p.~406, Rem.]{Gugenheim:1972} and also~\cite[p.~53]{Rubio:1991}.
This leads to a \emph{different} twisting cochain~\(\tilde t\), essentially related to~\(\tsz\) via the passage
to opposite simplicial sets as in \Cref{sec:ez-reversed}.

There are two more ``canonical'' Eilenberg--Zilber contractions that use the transposed Alexander--Whitney map
\begin{gather}
  T_{C(Y),C(X)}\,\AW_{Y,X}\,(\tau_{X,Y})_{*}\colon C(X\times Y)\to C(X)\otimes C(Y), \\
  \notag (x,y) \mapsto \sum_{k=0}^{n} (-1)^{k(n-k)}\,\partial_{0}^{k-1}\,x \otimes \partial_{k+1}^{n}\,y
\end{gather}
for~\((x,y)\in (X\times Y)_{n}\),
together with the shuffle map and variants of~\(\h\) and~\(\hh\), see~\cite[p.~52]{Rubio:1991}.
Applying Shih's algorithm to one of them and the twisted Cartesian product~\(B\times_{\tau}F\)
does indeed lead to a differential induced by  Szczarba's twisting cochain~\(\tsz\) on the graded module~\(C(B)\otimes C(F)\).
However, because the Alexander--Whitney map has changed, so have the comodule structures over~\(C(B)\).
This means that one does not obtain Szczarba's twisted tensor product this way, either.
It therefore appears that \Cref{thm:szarba-shih} does not shed light onto
the precise relationship between the two twisted shuffle maps~\eqref{eq:twisted-shuffle} and~\eqref{eq:quasi-sz}
and the underlying twisted tensor products.


\begin{thebibliography}{99}

\bibitem{Brown:1959}
E.~H.~Brown,
\newblock Twisted tensor products,~I,
\newblock \textit{Ann.\ Math.}~\textbf{69} (1959), 223--246;
\newblock \doi{10.2307/1970101}

\bibitem{Brown:1965}
R.~Brown,
\newblock The twisted Ei\-len\-berg--Zilber theorem,
\newblock pp.~33--37 in:
\newblock W.~Sierpiński, K.~Kuratowski (eds.),
\newblock \textit{Simposio di Topologia (Siracusa, 1964)},
\newblock Edizioni Oderisi, Gubbio 1965;
\newblock available at \url{http://groupoids.org.uk/pdffiles/twistedez.pdf}

\bibitem{EilenbergMacLane:1953}
S.~Eilenberg S.~Mac\,Lane,
\newblock On the groups~$H(\Pi,n)$,~I,
\newblock \textit{Ann.\ Math.}~\textbf{58} (1953), 55--106;
\newblock \doi{10.2307/1969820}

\bibitem{EilenbergMacLane:1954}
S.~Eilenberg S.~Mac\,Lane,
\newblock On the groups~$H(\Pi,n)$,~II,
\newblock \textit{Ann.\ Math.}~\textbf{60} (1954), 49--139;
\newblock \doi{10.2307/1969702}

\bibitem{EilenbergMoore:1966}
S.~Eilenberg, J.~C.~Moore,
\newblock Homology and fibrations~I: Coalgebras, cotensor product and its derived functors,
\newblock \textit{Comment.\ Math.\ Helv.}~\textbf{40} (1966), 199--236;
\newblock \doi{10.1007/BF02564371}

\bibitem{Franz:2001}
M.~Franz,
\newblock Koszul duality for tori,
\newblock doctoral dissertation, Univ.\ Konstanz 2001,
\newblock available at \url{http://math.sci.uwo.ca/~mfranz/papers/diss.pdf}

\bibitem{GonzalezDiazReal:1999}
R.~González-Díaz, P.~Real,
\newblock A combinatorial method for computing Steenrod squares,
\newblock \textit{J.\ Pure Appl.\ Algebra}~\textbf{139} (1999), 89--108;
\newblock \doi{10.1016/S0022-4049(99)00006-7}

\bibitem{Gugenheim:1972}
V.~K.~A.~M.~Gugenheim,
\newblock On the chain-complex of a fibration,
\newblock \textit{Illinois J.\ Math.}~\textbf{16} (1972), 398--414
\newblock \doi{10.1215/ijm/1256065766}

\bibitem{HessEtAl:2006}
K.~Hess, P.-E.~Parent, J.~Scott, A.~Tonks,
\newblock A canonical enriched Adams--Hilton model for simplicial sets,
\newblock \textit{Adv.\ Math.}~\textbf{207} (2006), 847--875;
\newblock \doi{10.1016/j.aim.2006.01.013}

\bibitem{HessTonks:2006}
K.~Hess, A.~Tonks,
\newblock The loop group and the cobar construction,
\newblock \textit{Proc.\ Amer.\ Math.\ Soc.}~\textbf{138} (2010), 1861--1876;
\newblock \doi{10.1090/S0002-9939-09-10238-1}

\bibitem{HusemollerMooreStasheff:1974}
D.~H.~Husemoller, J.~C.~Moore, J.~Stasheff,
\newblock Differential homological algebra and homogeneous spaces,
\newblock \textit{J.~Pure Appl.\ Algebra}~\textbf{5} (1974), 113--185;
\newblock \doi{10.1016/0022-4049(74)90045-0}

\bibitem{May:1968}
J.~P.~May,
\newblock \textit{Simplicial objects in algebraic topology},
\newblock Chicago Univ.\ Press, Chicago 1992

\bibitem{MoraceProute:1994}
F.~Morace, A.~Prouté,
\newblock Brown's natural twisting cochain and the Ei\-len\-berg--Mac\,Lane transformation,
\newblock \textit{J.\ Pure Appl.\ Algebra}~\textbf{97} (1994), 81--89;
\newblock \doi{10.1016/0022-4049(94)90040-X}

\bibitem{Real:2000}
P.~Real,
\newblock Homological perturbation theory and associativity,
\newblock \textit{Homology Homotopy Appl.}~\textbf{2} (2000), 51--88;
\newblock \doi{10.4310/hha.2000.v2.n1.a5}

\bibitem{Rubio:1991}
J.~Rubio García,
\newblock \textit{Homologie effective des espaces de lacets itérés~: un logiciel},
\newblock doctoral dissertation, Univ.\ Joseph Fourier, Grenoble 1991;
\newblock available at \url{http://investigacion.unirioja.es/documentos/5c13b144c8914b6ed37762a8}
 
\bibitem{Shih:1962}
Shih W.,
\newblock Homologie des espaces fibrés,
\newblock \textit{Publ.\ Math.\ IHES}~\textbf{13} (1962), 93--176;
\newblock available at \url{http://www.numdam.org/item/PMIHES_1962__13__5_0/}

\bibitem{Szczarba:1961}
R.~H.~Szczarba,
\newblock The homology of twisted cartesian products,
\newblock \textit{Trans.\ Amer.\ Math.\ Soc.}~\textbf{100} (1961), 197--216;
\newblock \doi{10.2307/1993317}

\end{thebibliography}
\end{document}